\documentclass[11pt]{article}
\usepackage{amsmath,amssymb,amsthm}
\usepackage{geometry}
\usepackage{mathtools}
\usepackage{mathrsfs}
\usepackage[hidelinks]{hyperref}
\usepackage{enumitem} 
\numberwithin{equation}{section}  % 按节编号
\usepackage{cite} 

\newtheorem{theorem}{Theorem}
\newtheorem{lemma}{Lemma}

\title{Approximation Analysis of a Parabolic-Parabolic Chemotaxis Model with Logarithmic Nonlinearity}
\author{
	LI Shijun\thanks{Email: sjlee@hainanu.edu.cn} \\
	School of Mathematics and Statistics, Hainan University, Haikou, China \\
	\and
	ZHAO Yashuang \\
	School of Mathematics and Statistics, Hainan University, Haikou, China \\
	\and
	XU Shaopeng\thanks{Corresponding author: xuxsp@126.com} \\
	School of Mathematics and Statistics, Hainan University, Haikou, China \\
	\and
	LI Shengjun \\
	School of Mathematics and Statistics, Hainan University, Haikou, China
}
\date{\notag}
\begin{document}
	\maketitle
	\noindent \textbf{Abstract:} We consider the Keller-Segel system with logical source
	\begin{align*}
		\begin{cases}
			u_t = \nabla \cdot (\phi(u)\nabla u) - \nabla \cdot (\psi(u)\nabla v)+f(u), & x \in \Omega, \; t > 0, \\
			v_t = \Delta v - v + u, & x \in \Omega, \; t > 0, 
		\end{cases}
	\end{align*}
	in a smooth bounded domain \(\Omega \subset \mathbb{R}^n\) with \(n \geq 2\), the Neumann initial-boundary value problem admits a globally defined, uniformly bounded classic solution for all sufficiently regular non-negative
	initial data \(u_0\) and \(v_0\). In the first equation, assume that \(\phi\) and \(\psi\) are dominated by a logarithmic function and a polynomial respectively. The logical source \(f\)  representing the natural growth and decay of cells satisfies \(f \in W^{1,\infty}_{\mathrm{loc}}(\Omega)\) and \(f(0) \geq 0\). Then we will see that the unique solution \(u \in C^{2,1}((\overline{\Omega}) \times [0,T] )\) and  \(v \in W^{1,q}([0,T] ; C^{2,1}(\overline{\Omega}))\).
	 %\(\varphi(u) = \ln^\alpha(1+u)\) with \(\alpha \geq 0\),
	 %\item \(c_1 u^\beta \le \psi(u) \le c_2 u^\beta\) with \(c_1,c_2>0\), \(\beta \ge 0\),
	 %\item \(-b u^\kappa \le f(u) \le a - b u^\kappa\) with \(a,b>0\), \(\kappa \ge 1\).
	\bigskip
	
	\noindent \textbf{Keywords:}  Parabolic-parabolic system; Logarithmic sensitivity; Function approximation; Blow-up; Chemotaxis; Asymptotic behavior
	
	\vspace{1cm}
	
	% ========== 正文开始 ==========

\section{Introduction}
	
	The Keller--Segel system, since its introduction in 1970 \cite{KELLER1970399} and 1971 \cite{KELLER1971235}, has served as a foundational model for understanding chemotaxis---the directed movement of cells in response to chemical gradients. The two original chemotaxis-consumption model equations are as follows:
	\begin{equation}
		\begin{cases}
		n_t = \Delta n - \nabla \cdot (n \nabla c), & x \in \Omega, \, t > 0, \\[2mm]
		c_t = \Delta c - c + n, & x \in \Omega, \, t > 0, \\[2mm]
		\dfrac{\partial n}{\partial \nu} = \dfrac{\partial c}{\partial \nu} = 0, & x \in \partial\Omega, \, t > 0, \\[2mm]
		(n(x, 0), c(x, 0)) = (n_0(x), c_0(x)), & x \in \Omega.
		\end{cases}
	\end{equation}
	and
	\begin{equation}\label{1971}
		\begin{cases}
		n_t = \Delta n - \nabla \cdot (n \nabla c), & x \in \Omega, \, t > 0, \\[2mm]
		c_t = \Delta c - c n, & x \in \Omega, \, t > 0, \\[2mm]
		\dfrac{\partial n}{\partial \nu} = \dfrac{\partial c}{\partial \nu} = 0, & x \in \partial\Omega, \, t > 0, \\[2mm]
		(n(x, 0), c(x, 0)) = (n_0(x), c_0(x)), & x \in \Omega.
		\end{cases}
	\end{equation}
	Mathematically, this system is typically described by two coupled partial differential equations: one for the cell density \(n\) and one for the chemical concentration \(c\). Keller and Segel primarily analyzed solutions of \eqref{1971} in one- and two-dimensional cases, and discovered the phenomenon of blow-up in two dimensions, which laid the foundation for subsequent research. It was not until 2011 that Tao. and Winkler. resolved the situation in \(\mathbb{R}^3\)\cite{TAO20122520}. This paper deals with positive solutions of
	\[
	\begin{cases} 
		u_t = \Delta u - \nabla \cdot (u \nabla v), & x \in \Omega, \, t > 0, \\ 
		v_t = \Delta v - uv, & x \in \Omega, \, t > 0,
	\end{cases}
	\]
	under homogeneous Neumann boundary conditions in bounded convex domains \(\Omega \subset \mathbb{R}^3\) with smooth boundary.  
	It is shown that for arbitrarily large initial data, this problem admits at least one global weak solution for which there exists \(T > 0\) such that \((u, v)\) is bounded and smooth in \(\Omega \times (T, \infty)\). Moreover, it is asserted that such solutions approach spatially constant equilibria in the large time limit.
	
	The mathematical analysis of Keller--Segel systems has evolved through distinct historical phases, each expanding the theory's scope and depth. The initial period (1970s--1990s) focused on linear models, culminating in the celebrated critical mass phenomenon: in two dimensions, solutions blow up if the initial mass exceeds \(4\pi/\chi\), otherwise they exist globally \cite{JagerLuckhaus1992,Nagai1995}. This established blow-up as a central theme in chemotaxis research.
	
	The turn of the century saw the introduction of nonlinear diffusion, particularly porous-medium type \(\varphi(u) = u^{m-1}\). Researchers established that for \(m > 2 - \frac{2}{n}\), diffusion dominates and solutions remain global \cite{Sugiyama2006}.
	
	The past decade has witnessed increased attention to growth terms, especially logistic damping \(f(u) = \lambda u - \mu u^2\), which can prevent blow-up for sufficiently large \(\mu\) \cite{TelloWinkler2007}. More recent work has explored combinations: porous-medium diffusion with logistic damping, or power-law sensitivity with constant diffusion. However, logarithmic diffusion represents a significantly slower, sub-polynomial growth regime rarely examined in chemotaxis contexts, and the simultaneous consideration of all three nonlinearities---particularly within the biologically relevant parabolic--parabolic framework---remains largely unexplored.
	
	This historical progression reveals a field maturing from establishing basic existence to characterizing blow-up thresholds, and now to understanding how combinations of nonlinear mechanisms interact. Early work asked: ``Do solutions exist?'' Later research asked: ``When do they blow up?'' Contemporary inquiry asks: ``How do multiple nonlinearities compete to determine the existence--blow-up boundary?'' Our work positions itself at this current frontier.
	
	Originally formulated to describe slime mold aggregation, its applications now extend to bacterial pattern formation, immune response, embryonic development, wound healing, and tumor metastasis. The classical model assumes constant diffusion and sensitivity, yet biological reality often involves nonlinear dependences: cell motility decreases in crowded environments, signal sensing saturates at high concentrations, and population growth is limited by carrying capacity. These observations motivate the study of generalized Keller--Segel systems with nonlinear diffusion \(\phi(u)\), nonlinear sensitivity \(\psi(u)\), and growth regulation \(f(u)\). 
	Subsequently, equations of the following form have been extensively studied:
	\begin{equation}
		\begin{cases}
			u_t = \nabla \cdot (\phi(u)\nabla u) - \chi \nabla \cdot (\psi(u)\nabla v)+ \tau f(u), & x \in \Omega, \; t > 0, \\
			v_t = \Delta v - v + u, & x \in \Omega, \; t > 0, \\
			\dfrac{\partial u}{\partial n} = \dfrac{\partial v}{\partial n} = 0, & x \in \partial\Omega, \; t > 0, \\
			u(x,0) = u_0(x), \; v(x,0) = v_0(x), & x \in \Omega.
		\end{cases}
	\end{equation}
	where \(\tau \in \{0,1\}\) . 
	In wound healing and tumor invasion specifically, cell migration is critically modulated by local cell density---high densities slow movement through mechanical constraints and contact inhibition, while chemical gradients are sensed through saturable receptors. These processes are naturally modeled by nonlinear extensions of the Keller--Segel framework. By analyzing a generalized system incorporating three distinct nonlinearities---logarithmic diffusion, power-law sensitivity, and polynomial damping---we can elucidate how a common set of core mechanisms governs outcomes ranging from controlled, tissue-reparative aggregation to pathological accumulation in thrombosis, fibrosis, and tumor microenvironments.
	Also, the second equation
	\begin{align}
		v_t = \Delta v - v + u
	\end{align}
	can also be transformed, under certain conditions, into
	\begin{align}
		0 = \Delta v - v + u.
	\end{align}
	A great many people have done such work, such as Michael Winkler \cite{Winkler_2010}.
	In this article, we consider the following parabolic-parabolic chemotaxis system with logarithmic nonlinearity:
	\begin{equation}\label{eq:equation}
		\begin{aligned}
			\begin{cases}
				u_t = \nabla \cdot (\phi(u)\nabla u) - \nabla \cdot (\psi(u)\nabla v)+f(u), & x \in \Omega, \; t > 0, \\
				v_t = \Delta v - v + u, & x \in \Omega, \; t > 0, \\
				\dfrac{\partial u}{\partial \nu} = \dfrac{\partial v}{\partial \nu} = 0, & x \in \partial\Omega, \; t > 0, \\
				u(x,0) = u_0(x), \; v(x,0) = v_0(x), & x \in \Omega,
			\end{cases}
		\end{aligned}
	\end{equation}
	where the functions \(\phi\), \(\psi\) and \(f\) fulfill the following structural conditions:
	\begin{itemize}
		\item[\((H_1)\)] \(\phi(u) = \ln^{\alpha}(1+u)\) with \(\alpha \geq 1\);
		\item[\((H_2)\)] there exist constants \(c_1, c_2 > 0\) and \(\beta \geq 1\) such that
		\[
		c_1 u^{\beta} \leq \psi(u) \leq c_2 u^{\beta} \quad \text{for all } u \geq 0;
		\]
		\item[\((H_3)\)] there exist constants \(a \geq 0\), \(b > 0\) and \(\kappa \geq 2\) such that
		\[
		-b u^{\kappa} \leq f(u) \leq a - b u^{\kappa} \quad \text{for all } u \geq 0.
		\]
	\end{itemize}
	The logarithmic diffusion \(\phi(u) = \ln^\alpha(1+u)\) captures scenarios where cellular motility decays extremely slowly with density---a pattern observed in certain tumor microenvironments where cells maintain residual mobility even in crowded conditions. Unlike porous-medium type diffusion \(\phi(u) \sim u^{m-1}\) which vanishes at low densities, or constant diffusion which ignores density effects altogether, the logarithmic form exhibits vanishing diffusion at low densities (\(\phi(0)=0\)) coupled with sub-polynomial recovery at high densities. This creates a mathematically singular regime where standard parabolic theory does not directly apply.
	
	The power-law sensitivity \(\psi(u) \sim u^\beta\), bounded between \(c_1 u^\beta\) and \(c_2 u^\beta\), models collective sensing mechanisms that become increasingly effective in cell clusters. When \(\beta > 0\), this represents the biological phenomenon of quorum sensing or receptor upregulation, where aggregated cells exhibit heightened responsiveness to chemoattractant gradients. The bounding constants \(c_1\) and \(c_2\) allow for natural biological variability while maintaining mathematical tractability.
	
	For population dynamics, the double-sided damping condition \(-b u^\kappa \le f(u) \le a - b u^\kappa\) provides a flexible description of growth regulation. The upper bound \(a - b u^\kappa\) imposes carrying-capacity limitations that strengthen with exponent \(\kappa\), while the lower bound \(-b u^\kappa\) prevents unrealistically rapid cell depletion. This formulation accommodates situations where growth rates may fluctuate around a logistic curve due to environmental factors or phenotypic heterogeneity.

	The coupling \(\nabla \cdot (u^\beta \nabla v)\) introduces non-Lipschitz nonlinearity when \(\beta > 1\), creating mathematical complexity beyond classical semigroup methods. This superlinear growth can dominate the diffusion term locally, leading to finite-time singularity formation even when the total mass is bounded. The critical threshold between \(\beta \le 1\) and \(\beta > 1\) marks a qualitative change in aggregation dynamics, requiring refined comparison principles and blow-up criteria adapted to power-law nonlinearities.
	
	Balancing these aggregation-promoting mechanisms is the polynomial damping term bounded by \(-b u^\kappa \le f(u) \le a - b u^\kappa\). While damping can suppress blow-up for sufficiently large \(\kappa\), its effectiveness depends crucially on the relative growth rates of \(\beta\) and \(\kappa\). When \(\kappa < \beta + \frac{2}{n}\) (in space dimension \(n\)), damping may be too weak to counteract the focusing effect of chemotaxis, permitting finite-time concentration. Finally, the parabolic--parabolic coupling between \(u\) and \(v\) adds a temporal lag that can either regularize or exacerbate singularities, demanding careful treatment of dual energy structures and maximal regularity estimates for the heat operator.
	
	The primary contribution of this paper is establishing a complete existence--blow-up dichotomy for systems combining logarithmic diffusion, power-law sensitivity, and polynomial damping. 
	
	Together, the three exponents \((\alpha, \beta, \kappa)\) form a parameter space that quantifies the balance between dispersal, aggregation, and growth control. Small \(\alpha\) indicates weak density-dependent diffusion, large \(\beta\) signifies strong collective sensing, and small \(\kappa\) corresponds to mild growth inhibition---a combination favoring aggregation and potential blow-up. Conversely, larger \(\alpha\), smaller \(\beta\), and larger \(\kappa\) promote dispersion and stability. Our analysis systematically maps how different regions of this \((\alpha, \beta, \kappa)\)-space correspond to distinct solution behaviors, thereby linking specific biological assumptions to mathematical outcomes.
	
	This configuration presents distinctive mathematical challenges that distinguish our analysis from classical Keller--Segel theory. The vanishing diffusion at low densities (\(\varphi(0)=0\)) precludes direct application of standard parabolic regularity theory near regions of sparse cell density. This loss of uniform ellipticity necessitates the development of weighted energy estimates that carefully track how the diffusion coefficient modulates gradient growth. The exponent \(\alpha\) plays a decisive role: for small \(\alpha\), the diffusion remains exceedingly weak even for moderate \(u\), potentially allowing gradient blow-up before density concentrations form.
	
	\section{Approximate Equation}
	Assume 
	\begin{align*}
		u_{\varepsilon} & \in X_1\coloneqq C\bigl(\bar{\Omega} \times [0, T_{\max})\bigr) 
		\cap C^{2,1}\bigl(\bar{\Omega} \times (0, T_{\max})\bigr), \\
		v_{\varepsilon} & \in X_2\coloneqq C\bigl(\bar{\Omega} \times [0, T_{\max})\bigr) 
		\cap C^{2,1}\bigl(\bar{\Omega} \times (0, T_{\max})\bigr) \cap L^{\infty}_{\mathrm{loc}}\bigl([0, T_{\max}); W^{1,q}(\Omega)\bigr),
	\end{align*}
	then both of them are compact metric spaces. Let \((u, v) \in C(X_1\times X_2)\).  Setting  
	\[
	\phi_{\varepsilon}(u) = ln^{\alpha}(1+u+\varepsilon)
	\]  
	\[
	-b(u+\varepsilon)^\kappa \leq f_{\varepsilon}(u) \leq a - b(u+\varepsilon)^\kappa, 
	\]
	here \(f_{\epsilon} \in C_0^\infty(\Omega \times [0,\frac{1}{\varepsilon}]) \to f ( \varepsilon \to 0) \).For every \(\varepsilon > 0\), consider the following equation 
	\begin{equation}
		\begin{aligned}[b]
			\begin{cases}\label{eq:approximation}
				u_{\varepsilon t} = \nabla \cdot (\phi_\varepsilon(u_\varepsilon)\nabla u_\varepsilon) - \nabla \cdot (\psi(u_\varepsilon)\nabla v_\varepsilon)+f_\varepsilon(u_\varepsilon), & x \in \Omega, \; t > 0, \\
				v_{\varepsilon t} = \Delta v_\varepsilon - v_\varepsilon + u_\varepsilon, & x \in \Omega, \; t > 0, \\
				\dfrac{\partial u_\varepsilon}{\partial \nu} = \dfrac{\partial v_\varepsilon}{\partial \nu} = 0, & x \in \partial\Omega, \; t > 0, \\
				u_\varepsilon(x,0) = u_0(x), \; v_\varepsilon(x,0) = v_0(x), & x \in \Omega,
			\end{cases}
		\end{aligned}
	\end{equation}
	on \(X_1\times X_2\).  Then For each \(\varepsilon > 0\), there exists a pair \((u_\varepsilon, v_\varepsilon)\) with \(u_\varepsilon, v_\varepsilon \in C(X_1\times X_2)\) satisfying the approximate equation.
	\begin{theorem}\label{thm:main}
		Let the following assumptions hold:
		\begin{enumerate}[label=(\roman*)]
			\item[\(\mathbb{I}\)] \(u_{0} \in W^{1,\infty}(\Omega)\) with \(u_{0} \geq 0\);
			\item[\(\mathbb{II}\)] \(f_{\varepsilon} \in C_0^\infty(\Omega \times [0,\frac{1}{\varepsilon}] ) \) and  \(f_{\varepsilon}(0) \geq 0 \);
			\item[\(\mathbb{III}\)] \(\phi_{\varepsilon}, \psi \in C^{2}([0,+\infty))\) with \(\phi_{\varepsilon}(s), \psi(s) \geq 0\) for all \(s \geq 0\).
		\end{enumerate}
		Then there exists a maximal existence time \(T_{\max} \in (0,+\infty]\) and a unique pair 
		of nonnegative functions \((u_{\varepsilon}, v_{\varepsilon})\) such that
		\begin{align*}
			u_{\varepsilon} & \in C\bigl(\bar{\Omega} \times [0, T_{\max})\bigr) 
			\cap C^{2,1}\bigl(\bar{\Omega} \times (0, T_{\max})\bigr), \\
			v_{\varepsilon} & \in C\bigl(\bar{\Omega} \times [0, T_{\max})\bigr) 
			\cap C^{2,1}\bigl(\bar{\Omega} \times (0, T_{\max})\bigr) \cap L^{\infty}_{\mathrm{loc}}\bigl([0, T_{\max}); W^{1,q}(\Omega)\bigr).
		\end{align*}
		Furthermore, if \(T_{\max} < \infty\), then
		\begin{equation}\label{eq:blowup}
			\lim_{t \nearrow T_{\max}} 
			\bigl( \|u_{\varepsilon}(\cdot, t)\|_{L^{\infty}(\Omega)} 
			+ \|v_{\varepsilon}(\cdot, t)\|_{W^{1,q}(\Omega)} \bigr) = \infty.
		\end{equation}
	\end{theorem}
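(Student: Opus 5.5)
We prove Theorem \ref{thm:main} by the standard local existence argument for chemotaxis systems with nondegenerate diffusion: a contraction mapping on a short time interval, then a parabolic regularity bootstrap, a comparison argument for nonnegativity, and finally a continuation argument that yields the extensibility criterion \eqref{eq:blowup}. The point of the regularization is that \(\phi_\varepsilon(s)=\ln^{\alpha}(1+s+\varepsilon)\ge \ln^{\alpha}(1+\varepsilon)>0\) for \(s\ge 0\), so the first equation of \eqref{eq:approximation} is uniformly parabolic for each fixed \(\varepsilon\).

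To keep the fixed-point map well defined for possibly negative iterates, we extend \(\phi_\varepsilon\), \(\psi\) and \(f_\varepsilon\) evenly (or by their values at \(0\)) to \(s<0\). We fix \(q>n\) and \(R>\|u_0\|_{L^\infty}+\|v_0\|_{W^{1,q}}+1\). For small \(T\in(0,1)\) we work in the closed set
\[
S=\bigl\{\bar u\in C(\bar\Omega\times[0,T]) : \|\bar u\|_{L^\infty(\Omega\times(0,T))}\le R\bigr\}.
\]
Given \(\bar u\in S\), we first let \(v\) solve the linear problem \(v_t=\Delta v-v+\bar u\) with Neumann data and \(v(0)=v_0\). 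By the variation-of-constants formula and the \(L^p\)–\(W^{1,q}\) smoothing estimates for the Neumann heat semigroup,
\[
\|v(t)\|_{W^{1,q}}\le C\|v_0\|_{W^{1,q}}+C\int_0^t (t-s)^{-1/2}\|\bar u(s)\|_{L^\infty}\,ds .
\]
This gives \(\|v\|_{L^\infty(W^{1,q})}\le C+CRT^{1/2}\). With \(v\) in hand, we define \(u=\Phi(\bar u)\) as the solution of the quasilinear but scalar problem
\[
u_t=\nabla\cdot(\phi_\varepsilon(\bar u)\nabla u)-\nabla\cdot(\psi(\bar u)\nabla v)+f_\varepsilon(\bar u).
\]
Its coefficients are bounded, and the ellipticity constant is \(\ln^{\alpha}(1+\varepsilon)\). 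The divergence-form forcing \(\psi(\bar u)\nabla v\) lies in \(L^\infty(0,T;L^q)\) with \(q>n\).

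The main obstacle is to show that \(\Phi\) maps \(S\) into itself and is a contraction in \(C(\bar\Omega\times[0,T])\). We would get the self-map property from De Giorgi–Nash–Moser type \(L^\infty\) and Hölder estimates for divergence-form equations, as in Ladyzhenskaya–Solonnikov–Ural'tseva or Porzio–Vespri. These require \(q>n\), and they give \(\|u\|_{C^{\theta,\theta/2}}\le C(R,\varepsilon)\) and \(\|u-u_0\|_{L^\infty}\to 0\) as \(T\to 0\). Because the coefficients \(\phi_\varepsilon(\bar u)\) are only continuous, a direct Lipschitz contraction in \(C\) is delicate. We would therefore first try the Schauder fixed-point theorem instead: the Hölder bound makes \(\Phi(S)\) relatively compact in \(C(\bar\Omega\times[0,T])\), and continuity of \(\Phi\) follows from uniqueness of the linear problems. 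We would then obtain uniqueness separately, by an energy estimate for the difference of two solutions. This estimate tests the difference equation against \(u_1-u_2\) and uses that both solutions are classical, hence Lipschitz in \(u\) on compact sets, together with Gronwall's inequality. This is the same route as in Horstmann–Winkler and Tao–Winkler for quasilinear Keller–Segel systems.

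Once a fixed point exists, we bootstrap. Hölder continuity of \(u\) gives \(v\in C^{2+\theta,1+\theta/2}\) on \(\bar\Omega\times(0,T]\) by Schauder theory. Then the first equation in nondivergence form, \(u_t=\phi_\varepsilon(u)\Delta u+\phi_\varepsilon'(u)|\nabla u|^2-\cdots\), has Hölder coefficients, which gives \(u\in C^{2,1}(\bar\Omega\times(0,T])\). Assumption \(\mathbb{III}\) (\(C^2\) regularity of \(\phi_\varepsilon,\psi\)) is used here. Nonnegativity follows by comparison: \(\underline u\equiv 0\) is a subsolution because \(f_\varepsilon(0)\ge 0\) by \(\mathbb{II}\), so \(u\ge 0\). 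Then \(v\ge 0\) by the maximum principle, since \(u\ge0\) and \(v_0\ge0\). Consequently the auxiliary extensions to \(s<0\) were never actually used.

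Finally, let \(T_{\max}\) be the supremum of the existence times, with uniqueness allowing us to glue solutions. The existence time \(T\) above depends only on an upper bound for \(\|u(t_0)\|_{L^\infty}+\|v(t_0)\|_{W^{1,q}}\), since \(\varepsilon\) is fixed. So if \(T_{\max}<\infty\) and this quantity stayed bounded along a sequence \(t_k\nearrow T_{\max}\), restarting from \(t_k\) would extend the solution beyond \(T_{\max}\), a contradiction. This gives \(\limsup=\infty\). To upgrade to the \(\lim\) stated in \eqref{eq:blowup}, we use the same uniform restart time: if the quantity were bounded along some sequence, the solution would extend. Hence the quantity tends to infinity along every sequence \(t\nearrow T_{\max}\), which is the full limit.
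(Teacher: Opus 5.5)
\textbf{Comparison with the paper's route.} Your construction differs from the paper's. The paper runs a Banach contraction for the pair $(u_\varepsilon,v_\varepsilon)$ in $C([0,T];C(\bar\Omega))\times C([0,T];W^{1,q}(\Omega))$. It represents $u_\varepsilon$ by a Duhamel formula with $e^{t\mathcal{B}}$, where $\mathcal{B}u=\nabla\cdot(\phi_\varepsilon(u)\nabla u)$, and applies Neumann heat-semigroup smoothing estimates to it. Uniqueness is proved separately by an $L^2$ energy estimate and Gronwall. Regularity, positivity and the extensibility criterion are not addressed in that proof.

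Because $\mathcal{B}$ is nonlinear, $e^{t\mathcal{B}}$ is not a linear semigroup, and those estimates are not available as written. Your device is the sound version of the local theory: freeze $\phi_\varepsilon(\bar u)$, get compactness from De Giorgi--Nash--Moser/H\"older bounds, and apply Schauder's theorem. Your uniqueness step is the same energy/Gronwall argument as the paper's, and you also cover the bootstrap, positivity and continuation that the paper omits. What a genuine semigroup contraction buys, when it exists (e.g.\ for linear diffusion), is an existence time depending only on $\|u_0\|_{L^\infty}+\|v_0\|_{W^{1,q}}$. That is exactly where your argument has a gap.

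\textbf{The gap: the restart time.} You obtain the self-map property from $\|u-u_0\|_{L^\infty}\to 0$, i.e.\ from H\"older estimates up to $t=0$. Those constants cannot depend on $R$ and $\varepsilon$ alone. Already for the heat equation, $\|u\|_{C^{\theta,\theta/2}(\bar\Omega\times[0,T])}\ge [u_0]_{C^\theta}$, and no rate for $\|e^{t\Delta}u_0-u_0\|_{L^\infty}\to0$ is uniform over data with $\|u_0\|_{L^\infty}\le R$. So your $T$ depends on the H\"older (or $W^{1,\infty}$) norm of the datum. Restarting from $u(\cdot,t_0)$ then gives a time controlled by $\|u(\cdot,t_0)\|_{C^\theta}$, which \eqref{eq:blowup} does not control. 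As written, neither the $\limsup$ nor the $\lim$ form of the criterion follows.

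The fix is to take the self-map from an $L^\infty$ bound that sees only $\|u_0\|_{L^\infty}$. Apply De Giorgi/Moser iteration to $(u-k)_\pm$ with $k=\|u_0\|_{L^\infty}$; this vanishes at $t=0$ and gives $\|u\|_{L^\infty(\Omega\times(0,T))}\le \|u_0\|_{L^\infty}+C(R,\varepsilon)T^{\gamma}$ for some $\gamma>0$. This is where $q>n$ enters, since $\psi(\bar u)\nabla v\in L^\infty(0,T;L^q)$. Then use the H\"older bound only for compactness, where its dependence on $u_0$ does not affect $T$. With that, your sequence argument for the full limit is correct. An interior-regularity upgrade of a uniform $L^\infty$ bound near $T_{\max}$ would rescue only the $\limsup$.

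\textbf{Smaller points.}
\begin{enumerate}
\item $\underline u\equiv 0$ is a subsolution only if also $\psi(0)=0$, because the equation evaluated at $u=0$ contains $-\psi(0)\Delta v$. This holds under $(H_2)$ but not under $\mathbb{III}$ alone. For a counterexample, take $u_0\equiv0$ and $v_0$ with $\psi(0)\Delta v_0>f_\varepsilon(0)$ somewhere.
\item Establish positivity before the nondivergence-form bootstrap, e.g.\ weakly by testing with $u_-$. The even or constant extensions of $\phi_\varepsilon,\psi$ are not $C^1$ at $0$.
\item In your $L^2$ uniqueness argument, the term $(\phi_\varepsilon(u_1)-\phi_\varepsilon(u_2))\nabla u_2\cdot\nabla(u_1-u_2)$ needs control of $\nabla u_2$ near $t=0$, which classical regularity on $(0,T)$ does not give. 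You can avoid this by writing the diffusion as $\Delta A_\varepsilon(u)$ with $A_\varepsilon(s)=\int_0^s\phi_\varepsilon$ and testing against $(1-\Delta)^{-1}(u_1-u_2)$, an $H^{-1}$-type estimate.
\end{enumerate}
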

	\begin{proof}
		Let
		\( T \in (0,1) \) to be specified below, we consider the Banach space
		\[
		X \coloneqq C([0,T]; C(\bar{\Omega})) \times C([0, T]; W^{1,q}(\bar{\Omega}))
		\]
		along with its closed convex subset 
		\[
		S \coloneqq \{(u_\varepsilon,v_\varepsilon) \in X \mid \|u_\varepsilon\|_{L^{\infty}((0,T);L^{\infty}(\Omega))} \leq R+1 , \|v_\varepsilon\|_{L^{\infty }((0,T);W^{1,q}(\Omega))} \leq KR+1\}
		\]
		where by hypotheses we have
		\( \|u_0\|_{L^{\infty}(\Omega)} \leq R\), 
		\( \|v_0\|_{W^{1,q}(\Omega)} \leq R \), and we pick 
		\(K>0 \) such that
		\(\| e^{t\Delta} z \|_{W^{1,q}(\Omega)} \leq K \| z \|_{W^{1,q}(\Omega)}\) for all
		\(z \in W^{1,q}(\Omega) \).
		
		We define
		\( \mathcal{B} u \coloneqq \nabla \cdot(\phi_\varepsilon(u)\nabla u) \) and
		\(\mathcal{A} u \coloneqq (-\Delta +1) u \) , here
		\(\mathcal{A} \) is a sectorial operator with Neumann data in
		\(L^p(\Omega)\) satisfying
		\( \mathrm{Re}\,\sigma(\mathcal{A}) > 1 \). For 
		\((u_\varepsilon,v_\varepsilon) \in S\) and
		\(t \in [0,T]\) ,we let
		\begin{align}
			\varphi(u_\varepsilon, v_\varepsilon)(t) 
			&\coloneqq 
			\begin{pmatrix}
				\varphi_1(u_\varepsilon, v_\varepsilon)(t) \\[6pt]
				\varphi_2(u_\varepsilon, v_\varepsilon)(t)
			\end{pmatrix} \nonumber \\
			&\coloneqq
			\begin{pmatrix}
				\displaystyle
				e^{t\mathcal{B}} u_0 
				- \int_0^t e^{(t-s)\mathcal{B}} \nabla \cdot 
				\bigl(\psi(u_\varepsilon(s)) \nabla v_\varepsilon(s)\bigr) \, \mathrm{d}s 
				+ \int_0^t e^{(t-s)\mathcal{B}} f_\varepsilon(u_\varepsilon(s)) \, \mathrm{d}s \notag \\[12pt] %取消编号
				\displaystyle
				e^{t(\Delta-1)} v_0 
				+ \int_0^t e^{(t-s)(\Delta-1)} u_\varepsilon(s) \, \mathrm{d}s
			\end{pmatrix}. \label{eq:phi-def}
		\end{align}
		It is obviously that
		\begin{equation}
			\begin{aligned}[b]
				&\|\varphi_1(u_\varepsilon, v_\varepsilon)(t)\|_{L^\infty(\Omega)} \\
				\leq&\|e^{t\mathcal{B}}u_0\|_{L^\infty(\Omega)} \\
				&+ \int_0^t \|e^{(t-s)\mathcal{B}} \nabla \cdot \bigl(\psi(u_\varepsilon(s)) \nabla v_\varepsilon(s)\bigr)\|_{L^\infty(\Omega)} \, \mathrm{d}s \\
				&\quad+ \int_0^t \|e^{(t-s)\mathcal{B}}f_\varepsilon(u_\varepsilon(s))\|_{L^\infty(\Omega)} \, \mathrm{d}s
			\end{aligned}
		\end{equation}
		From the maximum principle\cite{evans_partial_2010}, we obtain 
		\begin{equation}
			\|e^{t\mathcal{B}}u_0\|_{L^\infty(\Omega)} \leq \|u_0\|_{L^\infty(\Omega)} \leq R
		\end{equation}
		and
		\begin{equation}
			\int_0^t \|e^{(t-s)\mathcal{B}} f_\varepsilon(u_\epsilon(s))\|_{L^\infty(\Omega)} \, ds 
			\leq \int_0^t \|f_\varepsilon(u_\varepsilon(s))\|_{L^\infty(\Omega)} \, ds 
			\leq \|f_\varepsilon\|_{L^\infty((-R-1, R+1))} \cdot T
		\end{equation}
		for all
		\(t\in(0,T)\). Thus, for any
		\(p>\frac{nq}{q-n}\) and
		\(\alpha \in (\frac{n}{p},\frac{1}{2}-\frac{n}{2}(\frac{1}{q}-\frac{1}{p}))\),
		\(p\alpha>n \) holds and for some positive constant 
		\(C\),
		\(\|z\|_{L^\infty(\Omega)} \leq C \|\mathcal{A}^\alpha z\|_{L^p(\Omega)}\) as well as
		\(\|\mathcal{A}^\alpha e^{\sigma \Delta} z\|_{L^p(\Omega)} \leq C\sigma^{-\alpha} \|z\|_{L^p(\Omega)}\) for all
		\(z \in C^{\infty}_0(\Omega)\)\cite{henry1981geometric}. Therefore,
		\begin{equation}
			\begin{aligned}[b]
				&\int_0^t \|e^{(t-s)\mathcal{B}} \nabla \cdot \bigl(\psi(u_\varepsilon(s)) \nabla v_\varepsilon(s)\bigr)\|_{L^\infty(\Omega)} \, \mathrm{d}s \\
				\leq& C\int_0^t \|\mathcal{A}^{\alpha}e^{\frac{(t-s)}{2}\mathcal{B}}e^{\frac{(t-s)}{2}\mathcal{B}} \nabla \cdot \bigl(\psi(u_\varepsilon(s)) \nabla v_\varepsilon(s)\bigr)\|_{L^p(\Omega)} \, \mathrm{d}s \\
				\leq& C\int_0^t(t-s)^{-\alpha}\|e^{\frac{(t-s)}{2}\mathcal{B}} \nabla \cdot \bigl(\psi(u_\varepsilon(s)) \nabla v(s)\bigr)\|_{L^p(\Omega)} \, \mathrm{d}s \\
				\leq& C\int_0^t(t-s)^{-\alpha}(1+(\tfrac{t-s}{2})^{-\frac{1}{2}-\frac{n}{2}(\frac{1}{q}-\frac{1}{p})})e^{-\lambda_1 t}\|\psi(u_\varepsilon(s)) \nabla v_\varepsilon(s)\bigr\|_{L^p(\Omega)} \, \mathrm{d}s \\
				\leq& C\int_0^t(t-s)^{-\alpha}\bigl(\frac{1}{(\tfrac{t-s}{2})^{\frac{1}{2}+\frac{n}{2}(\frac{1}{q}-\frac{1}{p})}}+(\tfrac{t-s}{2})^{-\frac{1}{2}-\frac{n}{2}(\frac{1}{q}-\frac{1}{p})}\bigr)e^{-\lambda_1 t}\|\psi(u_\varepsilon(s)) \nabla v_\varepsilon(s)\bigr\|_{L^p(\Omega)} \, \mathrm{d}s \\
				=& C\int_0^t(t-s)^{-\alpha-\frac{1}{2}-\frac{n}{2}(\frac{1}{q}-\frac{1}{p})}e^{-\lambda_1 t}\|\psi(u_\varepsilon(s)) \nabla v_\varepsilon(s)\bigr\|_{L^p(\Omega)} \, \mathrm{d}s \\
				\leq& C\cdot T^{-\alpha+\frac{1}{2}-\frac{n}{2}(\frac{1}{q}-\frac{1}{p})}\|\psi(u_\varepsilon(s)) \nabla v_\varepsilon(s)\bigr\|_{L^q(\Omega)} \\
				\leq& C\cdot T^{-\alpha+\frac{1}{2}-\frac{n}{2}(\frac{1}{q}-\frac{1}{p})}(R+1)(KR+1)
			\end{aligned}
		\end{equation}
		for all 
		\(t \in (0,T)\) with
		\(T<1\) and
		\(\alpha <\frac{1}{2}-\frac{n}{2}(\frac{1}{q}-\frac{1}{p})\). Here 
		\(\lambda_1\) is the first eigenvalue of 
		\(\sigma(\mathcal{A})\). We have used that 
		\(\|e^{\sigma \mathcal{B}} \nabla \cdot z\|_{L^p(\Omega)} \leq C \sigma^{-\frac{1}{2} - \frac{n}{2} \left( \frac{1}{q} - \frac{1}{p} \right)} \|z\|_{L^{q}(\Omega)}\) for
		\(\sigma <1\) For all 
		\(\mathbb{R}^n$-valued functions $z \in C_0^\infty(\Omega)\).
		
		In the same way,
		\begin{equation}
			\begin{aligned}[b]
				&\quad\big\|\varphi_2(u_\varepsilon, v_\varepsilon)(t)\big\|_{W^{1,q}(\Omega)} \\
				&\leq e^{-t} \big\| e^{t\Delta} v_0 \big\|_{W^{1,q}(\Omega)} + C \int_0^t (t-s)^{-\frac{1}{2}} \| u_\varepsilon(s) \|_{L^q(\Omega)} \, \mathrm{d}s \\
				&\leq K \| v_0 \|_{W^{1,q}(\Omega)} + C \int_0^t (t-s)^{-\frac{1}{2}} \| u_\varepsilon(s) \|_{L^\infty(\Omega)} \, \mathrm{d}s \\
				&\leq KR + C T^{\frac{1}{2}} \cdot (R+1), \quad \forall t \in (0, T).
			\end{aligned}
		\end{equation}
		If we fixed 
		\(T \in (0,1)\) small enough then 
		\(\varphi\) maps
		\(S\) into itself.
		For 
		\((u_\varepsilon,v_\varepsilon)\in S\) and
		\((\tilde u_\varepsilon, \tilde v_\varepsilon)\in S\) we estimate
		\begin{equation}
			\begin{aligned}[b]
				&\big\|\varphi_1(u_\varepsilon, v_\varepsilon)(t)-\varphi_1(\tilde u_\varepsilon, \tilde v_\varepsilon)(t)\big\|_{L^{\infty}(\Omega)}  \\
				\leq&\bigg\|\int_0^t e^{(t-s)\mathcal{B}} \nabla \cdot \bigl(\psi(u_\varepsilon(s)) \nabla v_\varepsilon(s)-\psi(\tilde u_\varepsilon(s)) \nabla \tilde v_\varepsilon(s)\bigr) \, \mathrm{d}s \bigg\|_{L^\infty(\Omega)} \\
				&+\bigg\|\int_0^t e^{(t-s)\mathcal{B}} \bigl(f_\varepsilon(u_\varepsilon(s)) -f_\varepsilon(\tilde u_\varepsilon(s)) \bigr) \, \mathrm{d}s \bigg\|_{L^\infty(\Omega)} \\
				\coloneqq&\mathrm{I}+\mathrm{II}
			\end{aligned}
		\end{equation}
		\begin{equation}
			\begin{aligned}[b]
				\mathrm{I}&\leq \int_0^t \|e^{(t-s)\mathcal{B}} \nabla \cdot \bigl(\psi(u_\varepsilon(s)) \nabla v_\varepsilon(s)-\psi(\tilde u_\varepsilon(s)) \nabla \tilde v_\varepsilon(s)\bigr) \|_{L^\infty(\Omega)} \, \mathrm{d}s  \\
				&\leq C \int_0^t \| \mathcal{A}^{\alpha} e^{(t-s)\mathcal{B}} \nabla \cdot \bigl(\psi(u_\varepsilon(s)) \nabla v_\varepsilon(s)-\psi(\tilde u_\varepsilon(s)) \nabla \tilde v_\varepsilon(s)\bigr) \|_{L^p(\Omega)} \, \mathrm{d}s  \\
				&\leq C\int_0^t (t-s)^{-\alpha}\|e^{\frac{(t-s)}{2}\mathcal{B}} \nabla \cdot \bigl(\psi(u_\varepsilon(s)) \nabla v_\varepsilon(s)-\psi(\tilde u_\varepsilon(s)) \nabla \tilde v_\varepsilon(s)\bigr)\|_{L^p(\Omega)} \, \mathrm{d}s \\
				&\leq C\int_0^t (t-s)^{-\alpha}(1+(\tfrac{t-s}{2})^{-\frac{1}{2}-\frac{n}{2}(\frac{1}{q}-\frac{1}{p})})e^{-\lambda_1 t} \|\psi(u_\varepsilon(s)) \nabla v_\varepsilon(s)-\psi(\tilde u_\varepsilon(s)) \nabla \tilde v_\varepsilon(s)\|_{L^q(\Omega)} \, \mathrm{d}s \\
				&\leq C\int_0^t (t-s)^{-\alpha-\frac{1}{2}-\frac{n}{2}(\frac{1}{q}-\frac{1}{p})}e^{-\lambda_1 t} \|\psi(u_\varepsilon(s)) (\nabla v_\varepsilon(s)-\nabla \tilde v_\varepsilon(s)) \|_{L^q(\Omega)} \, \mathrm{d}s\\
				&\quad+C\int_0^t (t-s)^{-\alpha-\frac{1}{2}-\frac{n}{2}(\frac{1}{q}-\frac{1}{p})}e^{-\lambda_1 t} \|(\psi(u_\varepsilon(s))-\psi(\tilde u_\varepsilon(s))) \nabla \tilde v_\varepsilon(s)\|_{L^q(\Omega)} \, \mathrm{d}s\\
				&\leq C\int_0^t (t-s)^{-\alpha-\frac{1}{2}-\frac{n}{2}(\frac{1}{q}-\frac{1}{p})}e^{-\lambda_1 t} \|\psi(u_\varepsilon(s))\|_{L^q(\Omega)}\cdot\|\nabla v_\varepsilon(s)-\nabla \tilde v_\varepsilon(s) \|_{L^q(\Omega)} \, \mathrm{d}s\\
				&\quad+C\int_0^t (t-s)^{-\alpha-\frac{1}{2}-\frac{n}{2}(\frac{1}{q}-\frac{1}{p})}e^{-\lambda_1 t} \|\psi(u_\varepsilon(s))-\psi(\tilde u_\varepsilon(s))\|_{L^q(\Omega)}\cdot\| \nabla \tilde v_\varepsilon(s)\|_{L^q(\Omega)} \, \mathrm{d}s\\
				&\leq C\int_0^t (t-s)^{-\alpha-\frac{1}{2}-\frac{n}{2}(\frac{1}{q}-\frac{1}{p})}e^{-\lambda_1 t} \|\psi(u_\varepsilon(s))\|_{L^\infty(\Omega)}\cdot\|\nabla v_\varepsilon(s)-\nabla \tilde v_\varepsilon(s) \|_{L^q(\Omega)} \, \mathrm{d}s\\
				&\quad+C\int_0^t (t-s)^{-\alpha-\frac{1}{2}-\frac{n}{2}(\frac{1}{q}-\frac{1}{p})}e^{-\lambda_1 t}C(\beta)\|u_\varepsilon\|^{\beta-1}_{L^\infty(\Omega)}\cdot\|u_\varepsilon(s)-\tilde u_\varepsilon(s)\|_{L^\infty(\Omega)}\cdot\|\nabla \tilde v_\varepsilon(s)\|_{L^\infty(\Omega)} \, \mathrm{d}s\\
				&\leq CT^{-\alpha+\frac{1}{2}-\frac{n}{2}(\frac{1}{q}-\frac{1}{p})}(\|u_\varepsilon\|^\beta_{L^\infty(\Omega)}|(u_\varepsilon,v_\varepsilon)-(\tilde u_\varepsilon,\tilde v_\varepsilon)\|_X + \|\nabla \tilde v_\varepsilon\|_{L^\infty(\Omega)}\|(u_\varepsilon,v_\varepsilon)-(\tilde u_\varepsilon,\tilde v_\varepsilon)\|_X^\beta).
			\end{aligned}
		\end{equation}
		
		\begin{equation}
			\begin{aligned}[b]
				\mathrm{II}&\leq \int_0^t \|e^{(t-s)\mathcal{B}} \bigl(f_\varepsilon(u_\varepsilon(s)) -f_\varepsilon(\tilde u_\varepsilon(s)) \bigr) \|_{L^\infty(\Omega)} \, \mathrm{d}s \\
				&\leq \int_0^t \|f_\varepsilon(u_\varepsilon(s)) -f_\varepsilon(\tilde u_\varepsilon(s)) \|_{L^\infty(\Omega)} \, \mathrm{d}s  \\
				&\leq \int_0^t \|f_\varepsilon'\|_{L^\infty((-R-1,R+1))}\cdot \|u_\varepsilon(s) -\tilde u_\varepsilon(s) \|_{L^\infty(\Omega)} \, \mathrm{d}s  \\
				&\leq T\|f_\varepsilon'\|_{L^\infty((-R-1,R+1))}\cdot\|(u_\varepsilon,v_\varepsilon)-(\tilde u_\varepsilon,\tilde v_\varepsilon)\|_X.
			\end{aligned}
		\end{equation}
		Similarly,
		\begin{equation}
			\begin{aligned}[b]
				&\quad\big\|\varphi_2(u_\varepsilon, v_\varepsilon)(t)-\varphi_2(\tilde u_\varepsilon, \tilde v_\varepsilon)(t)\big\|_{W^{1,q}(\Omega)}  \\
				&=\bigg\|\int_0^t e^{(t-s)(\Delta-1)}\bigl(u_\varepsilon(s)-\tilde u_\varepsilon(s)\bigr) \, \mathrm{d}s \bigg\|_{W^{1,q}(\Omega)} \\
				&=\bigg(\int_\Omega\biggl|\int_0^t e^{(t-s)(\Delta-1)}\bigl(u_\varepsilon(s)-\tilde u_\varepsilon(s)\bigr) \, \mathrm{d}s\biggr|^q\mathrm{d}x\bigg)^{\frac{1}{q}}\\
				&\quad+\bigg(\int_\Omega\bigg(\int_0^t \biggl|\nabla e^{(t-s)(\Delta-1)}(u_\varepsilon(s)-\tilde u_\varepsilon(s))\biggr| \, \mathrm{d}s\bigg)^q\mathrm{d}x\bigg)^{\frac{1}{q}}\\
				&\coloneqq \mathrm{III}+\mathrm{IV}
			\end{aligned}
		\end{equation}
		\begin{equation}
			\begin{aligned}[b]
				\mathrm{III}&\leq C\int_0^t e^{-\lambda_1 t}\|u_\varepsilon(s) -\tilde u_\varepsilon(s) \|_{L^q(\Omega)} \, \mathrm{d}s \\
				&\leq CT\|(u_\varepsilon,v_\varepsilon)-(\tilde u_\varepsilon,\tilde v_\varepsilon)\|_X.
			\end{aligned}
		\end{equation}
		\begin{equation}
			\begin{aligned}[b]
				\mathrm{IV}&\leq\int_0^t\bigg(\int_\Omega\bigl|\nabla e^{(t-s)(\Delta-1)}(u_\varepsilon(s)-\tilde u_\varepsilon(s))\bigr| ^q\mathrm{d}x\bigg)^{\frac{1}{q}} \,\mathrm{d}s\\ 
				&\leq C\int_0^t (1+(t-s)^{-\frac{1}{2}})\|u_\varepsilon(s) -\tilde u_\varepsilon(s) \|_{L^q(\Omega)} \, \mathrm{d}s \\
				&\leq CT^{-\frac{1}{2}}\|u_\varepsilon-\tilde u_\varepsilon\|_{L^q(\Omega)} \\
				&\leq CT^{-\frac{1}{2}}\|(u_\varepsilon,v_\varepsilon)-(\tilde u_\varepsilon,\tilde v_\varepsilon)\|_X.
			\end{aligned}
		\end{equation}
		Then we obtain
		\begin{equation}
			\begin{aligned}[b]
				&\big\|\varphi_2(u_\varepsilon, v_\varepsilon)(t)-\varphi_2(\tilde u_\varepsilon, \tilde v_\varepsilon)(t)\big\|_{W^{1,q}(\Omega)}\leq CT^{-\frac{1}{2}}\|(u_\varepsilon,v_\varepsilon)-(\tilde u_\varepsilon,\tilde v_\varepsilon)\|_X.
			\end{aligned}
		\end{equation}
		For sufficiently small \(T\)
		\begin{equation}
			\begin{aligned}[b]
				&\big\|\varphi(u_\varepsilon, v_\varepsilon)(t)-\varphi(\tilde u_\varepsilon, \tilde v_\varepsilon)(t)\big\|_X\leq \tilde C\|(u_\varepsilon,v_\varepsilon)-(\tilde u_\varepsilon,\tilde v_\varepsilon)\|_X.
			\end{aligned}
		\end{equation}
		The case \(T \in (0, 1)\) can be extended to general $T > 0$ by a scaling argument, as we shall prove in the subsequent theorem.
	\end{proof}
	\begin{theorem}
		Let the hypotheses of \autoref{thm:main} be satisfied. Then the conclusion extends to arbitrary $T > 0$ by the scaling argument described above.
	\end{theorem}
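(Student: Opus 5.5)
The plan is to replace the "scaling argument" with the standard \emph{continuation (extension) argument}, because the fixed point in the proof of \autoref{thm:main} is local in a quantifiable way. The key point is that the existence time \(T\in(0,1)\) obtained there depends only on \(R\), the common bound for \(\|u_0\|_{L^\infty(\Omega)}\) and \(\|v_0\|_{W^{1,q}(\Omega)}\). It also depends on the fixed data \(\varepsilon,\phi_\varepsilon,\psi,f_\varepsilon,n,q,p\), but on nothing else about the initial data.

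So the first step is to record this explicitly. I would revisit the self-map estimates and the contraction estimates for \(\mathrm{I}\)–\(\mathrm{IV}\) and extract a nonincreasing function \(T_0(R)>0\) such that, for any initial datum with norms at most \(R\), \(\varphi\) is a contraction on \(S\) over \([0,T_0(R)]\). The contraction has to be taken with a positive power of \(T\) in front. Where the displayed bounds carry negative powers such as \(T^{-1/2}\), I would recompute them as \(\int_0^t(t-s)^{-1/2}\,ds\le 2T^{1/2}\), which is what those integrals actually give. Banach's fixed point theorem then yields a unique mild solution on \([0,T_0(R)]\).

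The second step is regularity and sign. Since \(\phi_\varepsilon\ge \ln^\alpha(1+\varepsilon)>0\), the first equation is uniformly parabolic on bounded sets of \(u_\varepsilon\). Standard parabolic Schauder/\(L^p\) bootstrapping (via \cite{henry1981geometric}) upgrades the mild solution to \(C^{2,1}(\bar\Omega\times(0,T])\), with continuity at \(t=0\). Nonnegativity of \(u_\varepsilon\) then follows from the comparison principle, using \(f_\varepsilon(0)\ge0\) so that \(0\) is a subsolution. Nonnegativity of \(v_\varepsilon\) follows because \(v_{\varepsilon t}-\Delta v_\varepsilon+v_\varepsilon=u_\varepsilon\ge0\).

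The third step is continuation. Let \(T_{\max}\) be the supremum of all \(T>0\) for which a solution in the stated class exists on \([0,T]\). Uniqueness on overlapping intervals, applied at each restart time via the contraction, shows that these solutions glue to a single solution on \([0,T_{\max})\). For the blow-up criterion \eqref{eq:blowup}, suppose \(T_{\max}<\infty\) and assume the limit fails. Then there are \(M<\infty\) and times \(t_k\nearrow T_{\max}\) with \(\|u_\varepsilon(t_k)\|_{L^\infty}+\|v_\varepsilon(t_k)\|_{W^{1,q}}\le M\). Restarting from \((u_\varepsilon(t_k),v_\varepsilon(t_k))\) gives a solution on \([t_k,t_k+T_0(M)]\). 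Choosing \(k\) with \(t_k>T_{\max}-T_0(M)\) extends the solution beyond \(T_{\max}\), a contradiction. This gives \(\limsup=\infty\). To obtain the genuine limit, I would also show that on any interval where the norm is at most \(M\), it cannot exceed, say, \(2KM+2\) within time \(T_0(M)\); this follows from the same invariance-of-\(S\) estimate.

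I expect the main obstacle to be the first step. The local time must depend only on \(R\), and the estimates for the variable-coefficient semigroup \(e^{t\mathcal B}\) must be uniform in the coefficient. In the proof of \autoref{thm:main}, \(e^{t\mathcal B}\) is treated as if it were the heat semigroup, although \(\mathcal B\) depends on \(u\). I would first try to circumvent this by freezing the coefficient: define \(\varphi_1\) as the solution of the linear problem \(w_t=\nabla\cdot(\phi_\varepsilon(\bar u)\nabla w)-\nabla\cdot(\psi(\bar u)\nabla\bar v)+f_\varepsilon(\bar u)\) for given \((\bar u,\bar v)\in S\). For this problem, \(L^\infty\) smoothing bounds with constants depending only on \(\ln^\alpha(1+\varepsilon)\) and \(\ln^\alpha(2+R+\varepsilon)\) are available from Moser/De Giorgi-type parabolic estimates. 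Failing that, I would fall back on Amann's quasilinear theory, which gives exactly this local existence together with the extensibility criterion.
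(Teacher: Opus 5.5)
Your route is genuinely different from the paper's, and its architecture is the right one for this statement. The paper's proof contains neither a scaling step nor a continuation step. It takes two solutions and tests the equation for $w=u_\varepsilon-\tilde u_\varepsilon$ with $w$ and the equation for $z=v_\varepsilon-\tilde v_\varepsilon$ with $-\Delta z$. It controls $(\psi(u_\varepsilon)-\psi(\tilde u_\varepsilon))\nabla\tilde v_\varepsilon\cdot\nabla w$ by H\"older and Gagliardo--Nirenberg, absorbs the gradient terms into the diffusion term, and applies Gr\"onwall to $\int_\Omega w^2+\int_\Omega|\nabla z|^2$. That gives uniqueness on any interval $(0,T)$ in one stroke, in a weak norm, and independently of how the solutions were built; it needs only $L^\infty$ bounds and a gradient bound on one of the two solutions. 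It proves nothing about existence beyond the short time of \autoref{thm:main}, nor about \eqref{eq:blowup}. Your scheme is what actually yields the maximal solution and the extensibility criterion: a local time $T_0(R)$ depending only on $\|u_0\|_{L^\infty(\Omega)}$ and $\|v_0\|_{W^{1,q}(\Omega)}$, gluing by uniqueness, and restarting at $t_k$. Note that your restart argument already refutes $\liminf<\infty$, so it gives the genuine limit; the additional $2KM+2$ step is unnecessary.

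The step needing care is your repair of the local theory. Freezing the coefficient and invoking De Giorgi--Moser bounds does give the self-map property with $T_0=T_0(R)$, but it does not give the contraction. The difference of two frozen problems contains $\nabla\cdot\bigl((\phi_\varepsilon(\bar u_1)-\phi_\varepsilon(\bar u_2))\nabla w_2\bigr)$. Bounding its effect in $L^\infty$ by $\|\bar u_1-\bar u_2\|_{L^\infty}$ needs $\nabla w_2$ in some $L^s((0,T);L^r(\Omega))$ with $r>n$, uniformly over $S$. Such gradient bounds depend on the modulus of continuity of $\phi_\varepsilon(\bar u_2)$, which $S$ does not control. Your fallback, Amann's theory, gives local well-posedness and uniqueness. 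However, it works in $W^{1,p}(\Omega)$ with $p>n$, so it does not by itself give an existence time depending only on $\|u_0\|_{L^\infty(\Omega)}$ and $\|v_0\|_{W^{1,q}(\Omega)}$. A clean fix is to use Schauder's theorem for existence on $[0,T_0(R)]$, where the time comes from the self-map estimate alone. Uniqueness, and hence your gluing step, can then come from an energy/Gr\"onwall estimate of exactly the paper's type.

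There are also two smaller slips. First, $0$ is a subsolution only if $\psi(0)=0$, since at $u_\varepsilon=0$ the right-hand side equals $-\psi(0)\Delta v_\varepsilon+f_\varepsilon(0)$. Hypothesis $\mathbb{III}$ does not give $\psi(0)=0$, but $(H_2)$ does; you also need $v_0\ge0$ for the sign of $v_\varepsilon$. Second, $S$ does not enforce $u\ge0$, while $\ln^\alpha(1+s+\varepsilon)$ is undefined or nonpositive for $s\le-\varepsilon$. You must therefore extend $\phi_\varepsilon,\psi,f_\varepsilon$ to negative arguments before running the fixed point.
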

	\begin{proof}
		Fix \(T_0 \in (0, T)\) and consider the perturbations
		\[
		w_\varepsilon \coloneqq u_\varepsilon - \tilde{u}_\varepsilon, \quad 
		z_\varepsilon \coloneqq v_\varepsilon - \tilde{v}_\varepsilon, \quad 
		(\tilde{u}_\varepsilon, \tilde{v}_\varepsilon) \in \Omega \times (0, T).
		\]
		Standard testing procedures applied to \eqref{eq:approximation} give
		\begin{equation}\label{eq:energy_u_simple}
			\begin{aligned}
				&\frac{1}{2} \frac{d}{dt} \int_\Omega w^2 \, \mathrm{d}x 
				+ \int_\Omega \phi_\varepsilon(u_\varepsilon) |\nabla w|^2 \, \mathrm{d}x +\int_\Omega (\phi_\varepsilon(u_\varepsilon)-\phi_\varepsilon(\tilde u_\varepsilon))\nabla \tilde u_\varepsilon \cdot \nabla w \, \mathrm{d}x  \\
				= &\int_\Omega \psi(u_\varepsilon) \nabla z \cdot \nabla w \, \mathrm{d}x  
				+\int_\Omega (\psi(v_\varepsilon)-\psi(\tilde u_\varepsilon))\nabla \tilde v_\varepsilon \cdot \nabla w \, \mathrm{d}x + \int_\Omega \bigl(f_\varepsilon(u_\varepsilon) - f_\varepsilon(\tilde u_\varepsilon)\bigr) w\, \mathrm{d}x ,
			\end{aligned}
		\end{equation}
		and
		\begin{equation}\label{eq:energy_v_simple}
			\begin{aligned}
				\frac{1}{2} \frac{d}{dt} \int_\Omega |\nabla z|^2 \, \mathrm{d}x   + \int_\Omega |\nabla z|^2 \, \mathrm{d}x  + \int_\Omega |\Delta z|^2 \, \mathrm{d}x  = -  \int_\Omega w \Delta z \, \mathrm{d}x .
			\end{aligned}
		\end{equation}
		Via Young's inequality, we get 
		\begin{equation}
			\begin{aligned}
				\int_\Omega \psi(u_\varepsilon) \nabla z \cdot \nabla w \, \mathrm{d}x \leq& Cu^\beta_\varepsilon \int_\Omega \nabla z \cdot \nabla w \\
				\leq& Cu^\beta_\varepsilon \bigg(\int_\Omega |\nabla z|^2\bigg)^\frac{1}{2} \bigg(\int_\Omega |\nabla w|^2\bigg)^\frac{1}{2}, \\
			\end{aligned}
		\end{equation}
		and
		\begin{equation}
			\begin{aligned}
				&\int_\Omega \big(\psi(u_\varepsilon)-\psi(\tilde u_\varepsilon)\big)\nabla \tilde v_\varepsilon \cdot \nabla w \, \mathrm{d}x \\
				\leq&\|\psi'\|_{L^\infty(\Omega)}\int_\Omega w \nabla \tilde v_\varepsilon \cdot \nabla w  \, \mathrm{d}x \\
				\leq&C\bigg(\int_\Omega |w|^\frac{2q}{q-2}\bigg)^\frac{q-2}{2q}\|\nabla \tilde v_\varepsilon\|_{L^q(\Omega)}\bigg(\int_\Omega |\nabla w|^2\bigg)^\frac{1}{2} \\
				\leq&C\bigg(\int_\Omega |\nabla w|^2\bigg)^\frac{1}{2}\bigg(\bigg(\int_\Omega |\nabla w|^2\bigg)^{\frac{n}{2q}}\bigg(\int_\Omega w^2\bigg)^{\frac{q-n}{2q}}+\bigg(\int_\Omega w^2\bigg)^{\frac{1}{2}}\bigg) \\
				=&C\bigg(\int_\Omega |\nabla w|^2\bigg)^\frac{q+n}{2q}\bigg(\int_\Omega w^2\bigg)^\frac{q-n}{2q} + C\bigg(\int_\Omega |\nabla w|^2\bigg)^\frac{1}{2}\bigg(\int_\Omega w^2\bigg)^\frac{1}{2} \\
			\end{aligned}
		\end{equation}
		Obviously,
		\begin{equation}
			\begin{aligned}
				\int_\Omega \bigl(f_\varepsilon(u_\varepsilon) - f_\varepsilon(\tilde u_\varepsilon)\bigr) w\, \mathrm{d}x \leq C\int_\Omega w^2.
			\end{aligned}
		\end{equation}
		By the Cauchy–Schwarz inequality and the Young inequality with \(\varepsilon\), we obtain
		\begin{equation}
			\begin{aligned}[b]
				&\frac{1}{2} \frac{d}{dt} \int_\Omega |\nabla z|^2 \, \mathrm{d}x   + \int_\Omega |\nabla z|^2 \, \mathrm{d}x  + \int_\Omega |\Delta z|^2 \, \mathrm{d}x  \\
				=&-\int_\Omega w \Delta z \, \mathrm{d}x \\
				\leq&\bigg(\int_\Omega |\Delta z|^2\bigg)^\frac{1}{2}\bigg(\int_\Omega w^2\bigg)^\frac{1}{2} \\
				\leq&\epsilon\int_\Omega |\Delta z|^2+C_{\varepsilon}\int_\Omega w^2.
			\end{aligned}
		\end{equation}
		For the specific case \(\epsilon = 1\), the inequality simplifies, as the Laplacian squared terms on both sides combine into a single term on the right. Above all, we get 
		\begin{equation}
			\begin{aligned}
				&\frac{1}{2} \frac{d}{dt} \bigg(\int_\Omega w^2 + \int_\Omega |\nabla z|^2 \bigg) + \int_\Omega |\nabla z|^2 + \int_\Omega \phi_\varepsilon(u_\varepsilon) |\nabla w|^2  +\int_\Omega (\phi_\varepsilon(u_\varepsilon)-\phi_\varepsilon(\tilde u_\varepsilon))\nabla \tilde u_\varepsilon \cdot \nabla w \\
				\leq&C_1\int_\Omega |\nabla z|^2 +C_2\int_\Omega |\nabla w|^2 +C_3\int_\Omega |\nabla w|^2 +C_4\int_\Omega w^2 +C_5\int_\Omega |\nabla w|^2 +C_6\int_\Omega w^2.
			\end{aligned}
		\end{equation}
		Finally, we obtain
		\begin{equation}
			\begin{aligned}
				\frac{d}{dt} \bigg(\int_\Omega w^2 + \int_\Omega |\nabla z|^2 \bigg) \leq C\bigg(\int_\Omega w^2 + \int_\Omega |\nabla z|^2 \bigg).
			\end{aligned}
		\end{equation}
		By Grönwall's inequality, \(w=\nabla z=0\) as desire. Then the solution is unique.
	\end{proof}
	%Consequently, solutions to the approximate equation exist only locally.
	\section{Blow-up}
	Let \((u,v)\) be the solution of \eqref{eq:approximation} and \(s_0>0\). Then we define the Lyapunov functional
	\begin{align}\label{Lyapunov functional}
		F(u, v)\coloneqq\int_{\Omega} ( G(u) - uv + \frac{1}{2}v^2 + \frac{1}{2}|\nabla v|^2 ) \, \mathrm{d}x,
	\end{align}
	here
	\begin{align}
		G(s)\coloneqq\int_{s_0}^s \int_{s_0}^\sigma \frac{\phi_\varepsilon(\tau)}{\psi(\tau)} \, \mathrm{d}\tau \, \mathrm{d}\sigma, \quad s>0.
	\end{align}
	\begin{theorem}
		The Lyapunov functional defined in \eqref{Lyapunov functional} satisfies the following identity
		\begin{align}\label{differential Lyapunov functional}
			\frac{\mathrm{d}F(u_\varepsilon,v_\varepsilon)}{\mathrm{d}t}=-\int_{\Omega} \psi(u_\varepsilon) \left| \frac{\phi_\varepsilon(u_\varepsilon)}{\psi(u_\varepsilon)}\nabla u_\varepsilon - \nabla v_\varepsilon \right|^2 - \int_{\Omega} v_{\varepsilon t}^2 + \int_{\Omega} f_\varepsilon(u_\varepsilon) \left(\int_{s_0}^{u_\varepsilon} \frac{\phi_\varepsilon(s)}{\psi(s)} \mathrm{d}s - v_\varepsilon \right)
		\end{align}
		for all \(t>0\).
	\end{theorem}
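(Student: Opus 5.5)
The identity should follow by differentiating each term of $F(u_\varepsilon,v_\varepsilon)$ in time, combining the terms into a product of $u_{\varepsilon t}$ with the "chemical potential" $G'(u_\varepsilon)-v_\varepsilon$ plus a term in $v_{\varepsilon t}$, and then integrating by parts with the Neumann data. Throughout I write $(u,v)$ for $(u_\varepsilon,v_\varepsilon)$. By definition of $G$,
\[
G'(s)=\int_{s_0}^{s}\frac{\phi_\varepsilon(\tau)}{\psi(\tau)}\,\mathrm{d}\tau ,
\qquad
\nabla G'(u)=\frac{\phi_\varepsilon(u)}{\psi(u)}\nabla u .
\]
The point of this choice of $G$ is that the flux in the first equation of \eqref{eq:approximation} factorizes as
\[
\phi_\varepsilon(u)\nabla u-\psi(u)\nabla v=\psi(u)\,\nabla\bigl(G'(u)-v\bigr).
\]

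The computation proceeds in four steps.

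\begin{enumerate}
\item Differentiate the terms of $F$ one by one, using the regularity $u,v\in C^{2,1}(\bar\Omega\times(0,T_{\max}))$ from \autoref{thm:main} for $t>0$. This gives $\frac{d}{dt}\int_\Omega G(u)=\int_\Omega G'(u)u_t$ and $-\frac{d}{dt}\int_\Omega uv=-\int_\Omega u_t v-\int_\Omega u v_t$.
\item For the $v$-terms, integrate $\int_\Omega \nabla v\cdot\nabla v_t$ by parts; the boundary term vanishes since $\partial_\nu v=0$ on $\partial\Omega$. This gives $\frac{d}{dt}\int_\Omega(\tfrac12 v^2+\tfrac12|\nabla v|^2)=\int_\Omega(v-\Delta v)v_t$.
\item Collect the terms. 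This yields $\frac{dF}{dt}=\int_\Omega (G'(u)-v)u_t+\int_\Omega(v-\Delta v-u)v_t$. The second equation says $v-\Delta v-u=-v_t$, so the last integral equals $-\int_\Omega v_t^2$.
\item Insert $u_t=\nabla\cdot\bigl(\psi(u)\nabla(G'(u)-v)\bigr)+f_\varepsilon(u)$ and integrate by parts once. The boundary flux vanishes because $\partial_\nu u=\partial_\nu v=0$. This produces exactly $-\int_\Omega\psi(u)\bigl|\frac{\phi_\varepsilon(u)}{\psi(u)}\nabla u-\nabla v\bigr|^2+\int_\Omega f_\varepsilon(u)\bigl(G'(u)-v\bigr)$, which is \eqref{differential Lyapunov functional}.
\end{enumerate}

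The step I expect to be the main obstacle is justifying that every term is finite and that the integrations by parts are legitimate. The difficulty is near $u=0$. Since $\phi_\varepsilon(0)=\ln^\alpha(1+\varepsilon)>0$ while $\psi(\tau)\le c_2\tau^\beta$ with $\beta\ge1$, the integrand $\phi_\varepsilon/\psi$ is not integrable at $0$. Hence $G'(u)$, and the quotient appearing in the dissipation term, are only meaningful where $u>0$.

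I would handle this by first establishing strict positivity $u_\varepsilon>0$ in $\bar\Omega\times(0,T_{\max})$. This should follow from the strong maximum principle applied to the first equation of \eqref{eq:approximation}, written as a uniformly parabolic equation (its diffusion coefficient satisfies $\phi_\varepsilon\ge\ln^\alpha(1+\varepsilon)>0$). The coefficients are bounded on compact time subintervals, and the reaction term satisfies $f_\varepsilon(0)\ge0$, provided $u_0\not\equiv0$. On each compact subinterval $[t_1,t_2]\subset(0,T_{\max})$, continuity then gives $u\ge\delta>0$. Consequently $G$, $G'$ and $\phi_\varepsilon/\psi$ are smooth and bounded along the solution there, and differentiation under the integral sign is justified.

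For the derivative $\frac{d}{dt}\int_\Omega|\nabla v|^2$, I would use that $v_t\in C^{1,0}$ in the interior in time, which follows from parabolic Schauder regularity. Alternatively, one can integrate the identity over $[t_1,t_2]$ and pass to the limit in the pointwise form. This gives the identity for all $t\in(0,T_{\max})$.
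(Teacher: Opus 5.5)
Your proposal is correct and carries out the same computation as the paper: differentiate $F$, insert both equations, and integrate by parts using $\partial_\nu u=\partial_\nu v=0$. It is organized more economically, since the factorization $\phi_\varepsilon(u)\nabla u-\psi(u)\nabla v=\psi(u)\nabla\bigl(G'(u)-v\bigr)$ produces the dissipation in one integration by parts, whereas the paper works in time-integrated form, completes the square algebraically, and removes the leftover cross term $\int_0^t\!\int_\Omega\phi_\varepsilon(u)\nabla u\cdot\nabla v$ by re-substituting the first equation and integrating $\int_0^t\!\int_\Omega u_t v$ by parts in time. Your positivity argument for $u_\varepsilon$ and your remark on justifying $\frac{d}{dt}\int_\Omega|\nabla v_\varepsilon|^2$ address points the paper leaves implicit; positivity is needed because $\phi_\varepsilon/\psi$ is not integrable at $0$, so $G'(u_\varepsilon)$ is otherwise undefined.
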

	\begin{proof}
		\begin{align*}
			\int_{\Omega} G(u) \bigg|_0^t =& \int_0^t \int_{\Omega} G'(u)\nabla \cdot (\phi_\varepsilon(u)\nabla u - \psi(u)\nabla v) + \int_0^t \int_{\Omega} G'(u)f_\varepsilon(u)\\
			=& -\int_0^t \int_{\Omega} G''(u)\nabla u \cdot (\phi_\varepsilon(u)\nabla u - \psi(u)\nabla v) + \int_{\Omega} f_\varepsilon(u) \left( \int_{s_0}^{u} \frac{\phi_\varepsilon(s)}{\psi(s)} \mathrm{d}s \right)\\
			=& -\int_0^t \int_{\Omega} \frac{\phi_\varepsilon^2(u)}{\psi(u)} |\nabla u|^2 + \int_0^t \int_{\Omega} \phi_\varepsilon(u)\nabla u \cdot \nabla v + \int_{\Omega} f_\varepsilon(u) \left( \int_{s_0}^{u} \frac{\phi_\varepsilon(s)}{\psi(s)} \mathrm{d}s \right),
		\end{align*}
		It's obvious that
		\begin{align*}
			\frac{\phi_\varepsilon^2(u)}{\psi(u)} |\nabla u|^2 = \psi(u) \left| \frac{\phi_\varepsilon(u)}{\psi(u)} \nabla u - \nabla v \right|^2 - \psi(u) |\nabla v|^2 + 2\phi_\varepsilon(u)\nabla u \cdot \nabla v,
		\end{align*}
		then we obtain
		\begin{align*}
			&\int_{\Omega}G(u)\Big|_0^t - \int_{\Omega} f_\varepsilon(u) \left( \int_{s_0}^{u} \frac{\phi_\varepsilon(s)}{\psi(s)} \mathrm{d}s \right) \\
			=& -\int_0^t \int_{\Omega} \psi(u) \left| \frac{\phi_\varepsilon(u)}{\psi(u)} \nabla u - \nabla v \right|^2 + \int_0^t \int_{\Omega} \psi(u) |\nabla v|^2 - \int_0^t \int_{\Omega} \phi_\varepsilon(u) \nabla u \cdot \nabla v.
		\end{align*}
		Observe that
		\begin{align*}
			-\int_0^t \int_\Omega \phi_\varepsilon(u) \nabla u \cdot \nabla v =& \int_0^t \int_\Omega \nabla \cdot (\phi_\varepsilon(u) \nabla u) \cdot v \\
			=& \int_0^t \int_\Omega (u_t-f_\varepsilon(u)) v + \int_0^t \int_\Omega \nabla \cdot (\psi(u) \nabla v) \cdot v \\
			=& \int_0^t \int_\Omega u_t v - \int_0^t \int_\Omega \psi(u) |\nabla v|^2 - \int_0^t \int_\Omega f_\varepsilon(u)v.
		\end{align*}
		After integration by parts, we get
		\begin{align*}
			\int_0^t \int_\Omega u_t v =& \int_\Omega uv \bigg|_0^t - \int_0^t \int_\Omega uv_t \\
			=& \int_\Omega uv \bigg|_0^t - \int_0^t (v_t - \Delta v + v) \cdot v_t \\
			=& \int_\Omega uv \bigg|_0^t - \int_0^t \int_\Omega v_t^2 - \frac{1}{2} \int_\Omega |\nabla v|^2 \bigg|_0^t - \frac{1}{2} \int_\Omega v^2 \bigg|_0^t.
		\end{align*}
		Combining all, we arrive at \eqref{differential Lyapunov functional}.
	\end{proof}
	Since the equations are solvable, we are now interested in the long-time behavior of solutions. Consider the case \( T = +\infty \), i.e., the solution \( (u, v) \) of \eqref{eq:approximation} exists for all \( t \geq 0 \).
	
	Under this assumption, we investigate the convergence of \( (u_\varepsilon(t), v_\varepsilon(t)) \) as \( t \to \infty \).
	Integrate both sides of \eqref{differential Lyapunov functional}:
	\begin{equation}
		\begin{aligned}
			&F(u_0,v_0) = F(u_\varepsilon,v_\varepsilon) \\
			&\quad+\int_{0}^{t}\int_{\Omega} \psi(u_\varepsilon) \left| \frac{\phi_\varepsilon(u_\varepsilon)}{\psi(u_\varepsilon)}\nabla u_\varepsilon - \nabla v_\varepsilon \right|^2 + \int_{0}^{t}\int_{\Omega} v_{\varepsilon t}^2 - \int_{0}^{t}\int_{\Omega} f_\varepsilon(u_\varepsilon) \left( \int_{s_0}^{u_\varepsilon} \frac{\phi_\varepsilon(s)}{\psi(s)} - v_\varepsilon \right)
		\end{aligned}
	\end{equation}
	We have known that 
	\begin{align}
		u_{\varepsilon} & \in C\bigl(\bar{\Omega} \times [0, T_{\max})\bigr) 
		\cap C^{2,1}\bigl(\bar{\Omega} \times (0, T_{\max})\bigr), \\
		v_{\varepsilon} & \in C\bigl(\bar{\Omega} \times [0, T_{\max})\bigr) 
		\cap C^{2,1}\bigl(\bar{\Omega} \times (0, T_{\max})\bigr) \cap L^{\infty}_{\mathrm{loc}}\bigl([0, T_{\max}); W^{1,q}(\Omega)\bigr),
	\end{align}
	Assume \(T_{max}=\infty\), then 
	\begin{align*}
		u_{\varepsilon} & \in C^{2+\alpha,1+\frac{\alpha}{2}}\bigl(\bar{\Omega} \times [0, \infty)\bigr) , \\
		v_{\varepsilon} & \in C^{2+\alpha,1+\frac{\alpha}{2}}\bigl(\bar{\Omega} \times [0, \infty)\bigr) .
	\end{align*}	 
	is held, and \( \{u_{\varepsilon}(\cdot,t)\}_{t>0},\{v_{\varepsilon}(\cdot,t)\}_{t>0} \) are relatively compact in \(C^2\bigl(\bar{\Omega}\bigr)\).\\
	Choosing a sequence \( \{u_{\varepsilon}(\cdot,t_k)\}_{k=1}^\infty\) acd \(\{v_{\varepsilon}(\cdot,t_k)\}_{k=1}^\infty \),we get that
	\begin{align*}
		u_{\varepsilon}(\,\cdot\,, t_k) &\longrightarrow u_{\varepsilon \infty}\coloneqq u_{\varepsilon}(\cdot, \infty) \quad \text{in } C^{2}(\bar{\Omega}), \\
		v_{\varepsilon}(\,\cdot\,, t_k) &\longrightarrow v_{\varepsilon \infty}\coloneqq v_{\varepsilon}(\cdot, \infty) \quad \text{in } C^{2}(\bar{\Omega}).
	\end{align*}	
	and \(u_{\varepsilon \infty}\leq\tilde{C_1}\),\(v_{\varepsilon \infty}\leq\tilde{C_2}\). This implies \(\tilde f_\varepsilon \in C^\alpha(\bar\Omega)\). \\
	Since \(u_\varepsilon \in L^1(\Omega)\) and \(v_\varepsilon \in W^{1,2}(\Omega)\), for  \(\, \forall t \in (0,T_{max})\),\((u_\varepsilon,v_\varepsilon)\) is a global bounded solution. There exists a sequence \(\{t_k\}_{k=1}^\infty\) of times \(t_k \to \infty\) such that 
	\begin{align*}
		u_{\varepsilon t_{k}} &\longrightarrow u_{\varepsilon \infty} \\
		v_{\varepsilon t_{k}} &\longrightarrow v_{\varepsilon \infty}
	\end{align*}	
	as \(k\to \infty\).
	Taking \(t_j\) is large sufficiently, such that 
	\begin{align}
		f_\varepsilon(u_{\varepsilon})=0,
	\end{align}	
	It is obvious that 
	\begin{align}
		\int_{0}^{t_j}\int_{\Omega} f_\varepsilon(u_\varepsilon) \left( \int_{s_0}^{u_\varepsilon} \frac{\phi_\varepsilon(s)}{\psi(s)} - v_\varepsilon \right) \,\mathrm{d}x \, \mathrm{d}t =0,
	\end{align}	
	Which implies that
	\begin{align}
		&F(u_0,v_0) = F(u_\varepsilon(\cdot,t_j),v_\varepsilon(\cdot,t_j))+\int_{0}^{t_j}\int_{\Omega} \psi(u_\varepsilon) \left| \frac{\phi_\varepsilon(u_\varepsilon)}{\psi(u_\varepsilon)}\nabla u_\varepsilon - \nabla v_\varepsilon \right|^2 + \int_{0}^{t_j}\int_{\Omega} v_{\varepsilon t}^2 
	\end{align}	
	By virtue of \(F(u,v)\) is continuous, we get that
	\begin{align}
		F(u_\varepsilon(\cdot,\infty),v_\varepsilon(\cdot,\infty)) =\int_{s_0}^{u_\varepsilon} \int_{s_0}^\sigma \frac{\phi_\varepsilon(\tau)}{\psi(\tau)} \, \mathrm{d}\tau \, \mathrm{d}\sigma - u_\varepsilon v_\varepsilon + \frac{1}{2}|\nabla v_\varepsilon|^2 + \frac{1}{2}v_\varepsilon^2 
	\end{align}
	\begin{align}
		\int_{0}^{\infty}\int_{\Omega} \psi(u_\varepsilon) \left| \frac{\phi_\varepsilon(u_\varepsilon)}{\psi(u_\varepsilon)}\nabla u_\varepsilon - \nabla v_\varepsilon \right|^2 + \int_{0}^{\infty}\int_{\Omega} v_{\varepsilon t}^2 - \int_{0}^{\infty}\int_{\Omega} f_\varepsilon(u_\varepsilon) \left( \int_{s_0}^{u_\varepsilon} \frac{\phi_\varepsilon(s)}{\psi(s)} - v_\varepsilon \right)<\infty.
	\end{align}	
	That implies when \(k\to \infty\)
	\begin{gather}
		v_{\varepsilon t}(\cdot,t_k)\longrightarrow0 \\
		\psi(u_\varepsilon) \left| \frac{\phi_\varepsilon(u_\varepsilon)}{\psi(u_\varepsilon)}\nabla u_\varepsilon - \nabla v_\varepsilon \right|^2\longrightarrow0
	\end{gather}	
	In this way, when \(\tilde{t}\to t_k\)
	\begin{equation}
		\begin{aligned}[b]
			&\int_{0}^{\tilde t}\int_{\Omega} \psi(u_\varepsilon) \left| \frac{\phi_\varepsilon(u_\varepsilon)}{\psi(u_\varepsilon)}\nabla u_\varepsilon - \nabla v_\varepsilon \right|^2 + \int_{0}^{\tilde t}\int_{\Omega} v_{\varepsilon t}^2 \\
			&\quad- \int_{0}^{\tilde t}\int_{\Omega} f_\varepsilon(u_\varepsilon) \left( \int_{s_0}^{u_\varepsilon} \frac{\phi_\varepsilon(s)}{\psi(s)} - v_\varepsilon \right) 
			+F(u_\varepsilon(\cdot,\tilde t),v_\varepsilon(\cdot,\tilde t))=F(u_\varepsilon(\cdot,0),v_\varepsilon(\cdot,0)),
		\end{aligned}	
	\end{equation}
	let \(k\to\infty\)
	\begin{equation}
		\begin{aligned}[b]
			&\int_{0}^{\infty}\int_{\Omega} \psi(u_{\varepsilon\infty}) \left| 	\frac{\phi_\varepsilon(u_{\varepsilon\infty})}{\psi(u_{\varepsilon\infty})}\nabla u_{\varepsilon\infty} - \nabla v_{\varepsilon\infty} \right|^2 + \int_{0}^{\infty}\int_{\Omega} \left(v_{\varepsilon\infty}\right)_t^2 \\
			&\quad- \int_{0}^{\infty}\int_{\Omega} f_\varepsilon(u_{\varepsilon\infty}) \left( \int_{s_0}^{u_{\varepsilon\infty}} \frac{\phi_\varepsilon(s)}{\psi(s)} - v_{\varepsilon\infty} \right) 
			+F(u_{\varepsilon\infty},v_{\varepsilon\infty})=F(u_\varepsilon(\cdot,0),v_\varepsilon(\cdot,0)).
		\end{aligned}	
	\end{equation}
	Then we obtain
	\begin{align}
		F(u_{\varepsilon\infty},v_{\varepsilon\infty}) - \int_{0}^{\infty}\int_{\Omega} f_\varepsilon(u_{\varepsilon\infty}) \left( \int_{s_0}^{u_{\varepsilon\infty}} \frac{\phi_\varepsilon(s)}{\psi(s)} - v_{\varepsilon\infty} \right) \leq F(u_{\varepsilon\infty},v_{\varepsilon\infty}) \leq F(u_{\varepsilon}(\cdot,0),v_{\varepsilon}(\cdot,0)).
	\end{align}	
	In addition, we obtain
	\begin{align}
		\frac{\phi_\varepsilon(u_\varepsilon)}{\psi(u_\varepsilon)}\nabla u_\varepsilon - \nabla v_\varepsilon \longrightarrow 0, \quad t_k\to\infty \quad on \, U,
	\end{align}	
	here
	\begin{align}
		U\coloneqq \{x\in\Omega\,|\,\psi(u_\varepsilon)>0\}.
	\end{align}	
	\textbf{Remark}:
	We can prove that 
	\begin{align}
		\frac{\phi_\varepsilon(u_\varepsilon)}{\psi(u_\varepsilon)}\nabla u_\varepsilon - \nabla v_\varepsilon \longrightarrow 0, \quad t_k\to\infty \quad on \, \Omega,
	\end{align}	
	If \(U\not\subseteq\Omega\), by virtue of \(U\) is an open set, there exists a sequence \(\{x_j\in\Omega\,|\,\psi(u_\varepsilon)>0\}\subset U\) such that \(x_j\to x_0 \in \Omega \backslash U \).
	When \(t\) grows to \(\infty\),
	\begin{align}
		\nabla\left( \int_{s_0}^{u_\varepsilon(x_j,\infty)} \frac{\phi_\varepsilon(s)}{\psi(s)} \, \mathrm{d}s- v_\varepsilon(x_j,\infty) \right)=0 ,
	\end{align}	
	which means that
	\begin{align}
		\left( \int_{s_0}^{u_\varepsilon(x_j,\infty)} \frac{\phi_\varepsilon(s)}{\psi(s)} \, \mathrm{d}s- v_\varepsilon(x_j,\infty) \right)=Constant.
	\end{align}	
	We have that \(\int_{s_0}^{u_\varepsilon(x_j,\infty)} \frac{\phi_\varepsilon(s)}{\psi(s)} \, \mathrm{d}s \) is bounded.However,
	\begin{align}
		\int_{s_0}^{u_\varepsilon(x_j,\infty)} \frac{\phi_\varepsilon(s)}{\psi(s)} \, \mathrm{d}s \geq \int_{s_0}^{u_\varepsilon(x_j,\infty)} \frac{1}{\psi(s)} \, \mathrm{d}s \longrightarrow +\infty
	\end{align}	
	as \(j\to \infty\), which is a contradiction.
	In particular, we show that any global solution converges to a stationary state \( (u_{\varepsilon\infty}, v_{\varepsilon\infty}) \) satisfying the following elliptic system
	\begin{align}
		\begin{cases}\label{steady}
			\displaystyle
			\nabla \cdot \bigl( \phi_{\varepsilon}(u_{\varepsilon\infty}) \nabla u_{\varepsilon\infty} \bigr) 
			- \nabla \cdot \bigl( \psi(u_{\varepsilon\infty}) \nabla v_{\varepsilon\infty} \bigr)=0, 
			& x \in \Omega, \\[8pt]
			\displaystyle\Delta v_{\varepsilon\infty} 
			- v_{\varepsilon\infty} 
			+ u_{\varepsilon\infty}=0, 
			& x \in \Omega, \\[8pt]
			\displaystyle
			\frac{\partial u_{\varepsilon\infty}}{\partial \nu} = \frac{\partial v_{\varepsilon\infty}}{\partial \nu} = 0, 
			& x \in \partial \Omega, \\[8pt]
			\displaystyle
			\int_{0}^{\infty} u_{\varepsilon\infty} \, \mathrm{d}t = \int_{0}^{\infty} v_{\varepsilon\infty} \, \mathrm{d}t, 
			& x \in \Omega.
		\end{cases}
	\end{align}
	Now we multiple the second equation in \eqref{steady} by \(v\) to obtain \( \int_{\Omega}|\nabla v|^2 + \int_{\Omega} v^2 = \int_{\Omega} uv \), then combining it with \eqref{Lyapunov functional} gives
	\begin{align}\label{no uv}
		F(u, v) = - \frac{1}{2}\int_{\Omega}|\nabla v|^2 - \frac{1}{2}\int_{\Omega} v^2 + \int_{\Omega}G_{s_0}(u)
	\end{align}
	\begin{lemma}
		Let \(\Omega = B_R(0)\), \(s_0 > 0\) and
		\begin{align}
			H(s)\coloneqq \int_{s_0}^s \frac{\sigma \phi_\varepsilon(\sigma)}{\psi(\sigma)} \, \mathrm{d}\sigma \quad \text{for } s > 0.
		\end{align}
		Then for all nonnegative and nonincreasing \(\zeta \in C^\infty([0, R])\) satisfying \(\zeta'(0) = 0 = \zeta(R)\), the inequality
		\begin{align}\label{divide at n=2}
			\frac{n-2}{2} \int_\Omega \zeta(|x|) |\nabla v|^2 - \frac{1}{2} \int_\Omega |x| \zeta'(|x|) |\nabla v|^2 \leq \int_\Omega |x| \zeta(|x|) (\nu + s_0) |\nabla v| + n \int_{\{u > s_0\}} \zeta(|x|) H(u)
		\end{align}
		holds for every radially symmetric solution \((u, v)\) of \eqref{steady}.
	\end{lemma}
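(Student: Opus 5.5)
The plan is a Pohozaev-type argument: test the second equation of \eqref{steady} against the radial multiplier \(\zeta(|x|)\,x\cdot\nabla v\). The first equation will then convert the resulting \(u\,x\cdot\nabla v\) term into a derivative of \(H(u)\).

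\textbf{Step 1 (left-hand side).} Write the second equation as \(\Delta v = v-u\) and multiply by \(\zeta(|x|)\,x\cdot\nabla v\). Integrating by parts twice, and using \(\zeta(R)=0\) (which kills the boundary term on \(\partial B_R\)), one gets
\[
\int_\Omega \zeta\,\Delta v\,(x\cdot\nabla v)
= -\int_\Omega \zeta|\nabla v|^2 - \int_\Omega \zeta'(|x|)\Bigl(\tfrac{x}{|x|}\cdot\nabla v\Bigr)(x\cdot\nabla v) - \tfrac12\int_\Omega \zeta\, x\cdot\nabla|\nabla v|^2 .
\]
The last term equals \(\tfrac12\int_\Omega (n\zeta+|x|\zeta')|\nabla v|^2\). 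Radial symmetry gives \(\bigl(\tfrac{x}{|x|}\cdot\nabla v\bigr)(x\cdot\nabla v)=|x||\nabla v|^2\). Combining, the integral equals \(\frac{n-2}{2}\int_\Omega\zeta|\nabla v|^2-\frac12\int_\Omega|x|\zeta'|\nabla v|^2\), which is exactly the left-hand side of \eqref{divide at n=2}. The condition \(\zeta'(0)=0\) makes \(\zeta(|x|)\) smooth at the origin, so the computation is legitimate for classical radial solutions.

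\textbf{Step 2 (the \(v\)-term).} The right-hand side is \(\int_\Omega\zeta\,v\,x\cdot\nabla v-\int_\Omega\zeta\,u\,x\cdot\nabla v\). I read \(\nu\) as a constant dominating \(|v|\) on \(\overline\Omega\); this is available since the steady state is bounded, with \(\nu\) fixed earlier as a bound for \(v_{\varepsilon\infty}\). Then the first integral is bounded directly by \(\int_\Omega |x|\zeta\,\nu\,|\nabla v|\).

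\textbf{Step 3 (the \(u\)-term, main obstacle).} The first equation of \eqref{steady}, together with the no-flux condition and radial symmetry, integrates to \(\phi_\varepsilon(u)\nabla u=\psi(u)\nabla v\). Hence \(\nabla v=\frac{\phi_\varepsilon(u)}{\psi(u)}\nabla u\) wherever \(\psi(u)>0\), which is justified by the Remark preceding \eqref{steady}. Consequently \(u\nabla v=\frac{u\phi_\varepsilon(u)}{\psi(u)}\nabla u=\nabla H(u)\).

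The difficulty is that \(H(u)<0\) for \(u<s_0\), so I split \(\Omega\) into \(\{u\le s_0\}\) and \(\{u>s_0\}\) and work with the Lipschitz truncation \(H_+(u)=\max\{H(u),0\}\).
\begin{itemize}
\item On \(\{u\le s_0\}\), the crude bound \(|u\,x\cdot\nabla v|\le s_0|x||\nabla v|\) gives the remaining \(\int_\Omega|x|\zeta\,s_0|\nabla v|\) contribution.
\item On \(\{u>s_0\}\), use \(\nabla H_+(u)=\mathbf 1_{\{u>s_0\}}u\nabla v\) a.e. Integrating by parts, again with \(\zeta(R)=0\), gives
\[
-\int_{\{u>s_0\}}\zeta\,x\cdot\nabla H(u)=\int_\Omega(n\zeta+|x|\zeta')H_+(u)\le n\int_{\{u>s_0\}}\zeta H(u),
\]
since \(\zeta'\le0\) and \(H_+\ge0\).
\end{itemize}
Collecting Steps 1–3 yields \eqref{divide at n=2}. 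The delicate points I expect are justifying \(\nabla v=\frac{\phi_\varepsilon}{\psi}\nabla u\) on the whole relevant set, and the a.e. chain rule for \(H_+\) across the level set \(\{u=s_0\}\).
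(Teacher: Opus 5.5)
Your argument is correct and is essentially the paper's proof. It uses the same multiplier $\zeta(|x|)\,x\cdot\nabla v$, the same integration by parts producing $\frac{n-2}{2}\int_\Omega\zeta|\nabla v|^2-\frac12\int_\Omega|x|\zeta'|\nabla v|^2$, and the same split at level $s_0$ using $\zeta'\le 0$ and $H\ge 0$ on $\{u>s_0\}$. You also supply two things the paper uses without comment: the identity $u\nabla v=\nabla H(u)$, obtained from the vanishing radial flux $\phi_\varepsilon(u)\nabla u=\psi(u)\nabla v$, and a clean treatment of the level set $\{u=s_0\}$ via $H_+$.

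The one thing to change is Step 2. The $\nu$ in the statement is a typo for the function $v$: the next lemma expands $(v+s_0)^2$ to obtain $\int_\Omega v^2$, which it then controls through Ehrling's lemma and the mass. So you should bound $\zeta v\,(x\cdot\nabla v)\le |x|\zeta v|\nabla v|$ pointwise using $v\ge 0$. Do not invoke a constant $\nu\ge\|v\|_{L^\infty(\Omega)}$; the paper never fixes such a constant, and using one would destroy the uniformity over steady states that the next lemma needs.
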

	\begin{proof}
		Testing \(\Delta v = v - u\) against \(\zeta(|x|)(x \cdot \nabla)v\) in \(\Omega\) gives
		\begin{align}
			\int_{\Omega} \zeta(|x|) (x \cdot \nabla v) \, \Delta v = \int_{\Omega} \zeta(|x|) \, v \, (x \cdot \nabla v) - \int_{\Omega} \zeta(|x|) \, u \, (x \cdot \nabla v).
		\end{align}
		Since \(\zeta(R)=0\), through integration by parts we obtain
		\begin{equation}
			\begin{aligned}[b]
				&\int_{\Omega} \zeta(|x|) (x \cdot \nabla v) \Delta v \\
				=& -\int_{\Omega} \zeta(|x|) |\nabla v|^2 - \int_{\Omega} \frac{\zeta'(|x|)}{|x|} (x \cdot \nabla v)^2 - \frac12 \int_{\Omega} \zeta(|x|) \, x \cdot \nabla (|\nabla v|^2) \\
				=& -\int_{\Omega} \zeta(|x|) |\nabla v|^2 - \int_{\Omega} \frac{\zeta'(|x|)}{|x|} (x \cdot \nabla v)^2 + \frac{n}{2}\int_{\Omega} \zeta(|x|) |\nabla v|^2 + \frac{1}{2}\int_{\Omega} |x| \zeta'(|x|) |\nabla v|^2. \\
			\end{aligned}
		\end{equation}
		By the assumption \(v\) is radially symmetric, we have \((x \cdot \nabla v)^2 = |x|^2 |\nabla v|^2\). Combining the above inequalities, we have
		\begin{equation}
			\begin{aligned}[b]
				&\int_{\Omega} \zeta(|x|)  v  (x \cdot \nabla v) - \int_{\Omega} \zeta(|x|) u  (x \cdot \nabla v) \\
				=& \frac{n-2}{2}\int_{\Omega} \zeta(|x|) |\nabla v|^2 - \int_{\Omega} \frac{\zeta'(|x|)}{|x|} (x \cdot \nabla v)^2 + \frac{1}{2}\int_{\Omega} |x| \zeta'(|x|) |\nabla v|^2 \\
				=&  \frac{n-2}{2}\int_{\Omega} \zeta(|x|) |\nabla v|^2 - \frac{1}{2}\int_{\Omega} |x| \zeta'(|x|) |\nabla v|^2.
			\end{aligned}
		\end{equation}
		By the chain rule,
		\begin{align}
			\nabla H(u) = H'(u)\nabla u = \frac{u\phi_\varepsilon(u)}{\psi(u)}\nabla u,
		\end{align}
		we have
		\begin{align}
			x \cdot \nabla H(u) = \frac{u\phi_\varepsilon(u)}{\psi(u)} (x \cdot \nabla u).
		\end{align}
		Then
		\begin{equation}
			\begin{aligned}[b]
				&-\int_{\Omega} \zeta(|x|) u (x \cdot \nabla v)  \\
				=& -\int_{\{u \le s_0\}} \zeta(|x|) u (x \cdot \nabla v) - \int_{\{u > s_0\}} \zeta(|x|)(x \cdot \nabla H(u)). \\
				=& -\int_{\{u \le s_0\}} \zeta(|x|) u (x \cdot \nabla v) + n \int_{\{u > s_0\}} \zeta(|x|) H(u) 
				+ \int_{\{u > s_0\}} |x| \zeta'(|x|) H(u) \\
				\leq& \int_{\Omega} |x| \zeta(|x|) \, s_0 \, |\nabla v| + n \int_{\{u > s_0\}} \zeta(|x|) H(u)
			\end{aligned}
		\end{equation}
		is held.
	\end{proof}
	\begin{lemma}
		Let \(n\geq 2\), assume \( \Omega=B_R(0)\subset\mathbb{R}^n\) for some \(R>0\). If \(H(s)\) is small sufficiently, then there exists \(C>0\) such that
		\begin{align}
			F(u,v)>-C
		\end{align}
		holds for all radial solutions \( (u,v) \) of \eqref{steady}.
	\end{lemma}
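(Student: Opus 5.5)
The lower bound on $F$ will come from the representation \eqref{no uv}. For a radial stationary solution,
\[
F(u,v) = -\tfrac12\int_\Omega|\nabla v|^2 - \tfrac12\int_\Omega v^2 + \int_\Omega G(u).
\]
It therefore suffices to bound $\int_\Omega|\nabla v|^2$ and $\int_\Omega v^2$ from above, uniformly over the stationary solutions, and $\int_\Omega G(u)$ from below.

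The term $\int_\Omega G(u)$ is the easy one. $G$ is convex, since $G''=\phi_\varepsilon/\psi\ge0$, and $G(s_0)=G'(s_0)=0$, so $G\ge0$ on $(0,\infty)$ and $\int_\Omega G(u)\ge0$.

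For $\int_\Omega v^2$, I would integrate the first equation of the steady system against $1$ to conserve mass; in the approximate setting mass is controlled by $f_\varepsilon$ through $(H_3)$, so $\int_\Omega u\le m$. Integrating the $v$-equation then gives $\int_\Omega v=\int_\Omega u\le m$. Testing the same equation with $v$ gives
\[
\int_\Omega|\nabla v|^2+\int_\Omega v^2=\int_\Omega uv .
\]
Hence $\int_\Omega v^2$ is controlled once $\int_\Omega|\nabla v|^2$ is, because of the Poincaré-type inequality $\|v\|_{L^2}\le C(\|\nabla v\|_{L^2}+\|v\|_{L^1})$.

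The main step is the bound on $\int_\Omega|\nabla v|^2$, where I would use the preceding lemma, i.e.\ inequality \eqref{divide at n=2}. I would choose $\zeta$ nonincreasing with $\zeta\equiv1$ on $[0,R/2]$, $\zeta(R)=0$ and $|\zeta'|\le 4/R$. Since $-\zeta'\ge0$, the left side dominates $\frac{n-2}{2}\int\zeta|\nabla v|^2+\frac12\int|x||\zeta'||\nabla v|^2$.

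When $n\ge3$, the first term bounds $\int_{B_{R/2}}|\nabla v|^2$. The term $\int|x|\zeta(\nu+s_0)|\nabla v|$ on the right is absorbed by Young's inequality into $\delta\int\zeta|\nabla v|^2+C_\delta$. The remaining term $n\int_{\{u>s_0\}}\zeta H(u)$ is where the hypothesis ``$H$ small'' enters. Under $(H_1)$ and $(H_2)$, $H(s)\lesssim s^{2-\beta}\ln^\alpha(1+s)$, so for $\beta\ge1$ it is sublinear up to logarithms. The smallness assumption is meant to give
\[
n\int\zeta H(u)\le \eta\int\zeta|\nabla v|^2 + C\int u\le \eta\int\zeta|\nabla v|^2+Cm
\]
with $\eta<\frac{n-2}{2}$. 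In making this precise I would express $u$ through $v$ via the identity $\int_{s_0}^u\phi_\varepsilon/\psi=v+\mathrm{const}$ obtained in the Remark.

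When $n=2$, the first term on the left vanishes, so only the weighted term $\int|x||\zeta'||\nabla v|^2$ is available. I would try $\zeta(r)=R-r$ near the origin, smoothed so that $\zeta'(0)=0$; then $-\zeta'=1$ and the left side controls $\frac12\int|x||\nabla v|^2$.

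The region near the boundary, where $\zeta$ vanishes, I would handle by elliptic regularity for $\Delta v=v-u$. By the radial ODE, $r^{n-1}v_r=\int_0^r\rho^{n-1}(v-u)\,d\rho$, so $|v_r|$ on $[R/2,R]$ is bounded by $C(m+\|v\|_{L^1})$. Combining this with the interior bound gives $\int_\Omega|\nabla v|^2\le C$, hence $\int_\Omega v^2\le C$, and therefore $F(u,v)\ge -C$.

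I expect the absorption of $n\int\zeta H(u)$ to be the main obstacle, since it is exactly where the vague smallness hypothesis on $H$ must be made quantitative. It is also delicate in the borderline case $n=2$, where the left-hand side is only weighted by $|x|$.
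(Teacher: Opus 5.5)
\textbf{There is a genuine gap, and it starts with your first reduction.} You discard $\int_\Omega G(u)\ge 0$ and then aim for a uniform bound on $\int_\Omega|\nabla v|^2$. In the paper, ``$H$ small'' is the quantitative hypothesis \eqref{varepsilon}, $H(s)\le\frac{n-2-\varepsilon}{n}G(s)+Ks$ for $s\ge s_0$. So $H$ is small \emph{relative to $G$}, and it is the positive term $\int_\Omega G(u)$ in \eqref{no uv} that pays for $H$. The paper applies \eqref{divide at n=2} with $\zeta_k\nearrow 1$, then Young, then Ehrling, and arrives at
\[
F(u,v)\ge -C_3m^2-C_2+\int_\Omega G(u)-\frac{n}{n-2-\varepsilon}\int_{\{u>s_0\}}H(u)\ge -C_3m^2-C_2-\frac{nKm}{n-2-\varepsilon}.
\]
The last step uses $G-\frac{n}{n-2-\varepsilon}H\ge-\frac{nK}{n-2-\varepsilon}s$ on $[s_0,\infty)$ together with $G\ge 0$.

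Once $G$ is thrown away, your substitute estimate $n\int\zeta H(u)\le\eta\int\zeta|\nabla v|^2+C\int u$ has no mechanism behind it:
\begin{itemize}
\item The equation $-\Delta v+v=u$ controls $\nabla v$ by $u$, not the other way round.
\item The relation $\int_{s_0}^u\phi_\varepsilon/\psi=v+c$ carries a solution-dependent constant $c$.
\item Mass alone does not help. For $\beta=1$, which $(H_2)$ permits, $H(s)$ grows like $s\ln^\alpha s$ with $\alpha\ge 1$, since your bound $s^{2-\beta}\ln^\alpha(1+s)$ is sharp there. This is superlinear, so $\int_\Omega H(u)$ is not controlled by $m$. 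Yet $H/G\to 0$ in that case, so \eqref{varepsilon} holds. The phrase ``sublinear up to logarithms'' hides exactly this case.
\end{itemize}
Your route closes only when $H$ is genuinely sublinear (e.g.\ $\beta>1$), where $\int_{\{u>s_0\}}H(u)\le\delta m+C_\delta|\Omega|$.

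\textbf{Two further steps fail as written.}

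First, Young gives
\[
\int|x|\zeta(v+s_0)|\nabla v|\le\delta\int\zeta|\nabla v|^2+\frac{1}{4\delta}\int|x|^2\zeta(v+s_0)^2,
\]
not $\delta\int\zeta|\nabla v|^2+C_\delta$. So $\int v^2$ re-enters with a large coefficient, and a Poincar\'e inequality with a fixed constant, applied afterwards, cannot close the loop. What is needed is Ehrling's lemma in the form $C_1\int v^2\le\eta\int|\nabla v|^2+C_\eta(\int v)^2$ with $\eta$ arbitrarily small, as in the paper, together with $\int v=\int u=m$. Note that $m$ must be prescribed for the class of steady states. Integrating the divergence-form first equation of \eqref{steady} gives no information, and $f_\varepsilon$ does not appear there. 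Hence $C$ necessarily depends on $m$.

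Second, for $n=2$ your choice $\zeta=R-r$ only controls $\int|x||\nabla v|^2$. That weight degenerates at the origin, which is where radial solutions concentrate, and your boundary-layer argument does not touch that region. The paper instead takes $\zeta(r)=\ln\frac{R^2+\eta}{r^2+\eta}$, for which $-\frac12|x|\zeta'(|x|)=\frac{|x|^2}{|x|^2+\eta}\to 1$. Fatou then yields the unweighted $\int|\nabla v|^2$. The weight in $\int\zeta H(u)$ becomes logarithmically unbounded, and the paper splits it with $ab\le\frac{1}{\delta e}e^{\delta a}+\frac1\delta b\ln b$. Be aware that the paper then bounds $\int H(u)\ln H(u)$ by a constant depending on $\|u\|_{L^\infty}$, which is not uniform over solutions, so this case needs extra care in any event.
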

	\begin{proof}
		\textbf{case} \(n=2\) We define
		\begin{align}
			\zeta(r)\coloneqq \ln\frac{R^2+\eta}{r^2+\eta}, \quad r\in [0,R],
		\end{align}
		which satisfies
		\begin{align}
			\zeta(r)\in C^\infty([0,R]),\quad\zeta'(0)=0=\zeta(R).
		\end{align}
		Then \eqref{divide at n=2} is simplified to
		\begin{align*}
			&\int_{\Omega}\frac{|x|^2}{|x|^2+\eta}|\nabla v|^2 \\
			\leq & \int_{\Omega} |x|\ln\frac{R^2+\eta}{|x|^2+\eta}(v+s_0)|\nabla v|+2\int_{\{u>s_0\}}\ln\frac{R^2+\eta}{|x|^2+\eta}H(u) \\
			\leq & \frac{1}{2}\int_{\Omega}\frac{|x|^2}{|x|^2+\eta}|\nabla v|^2 + \frac{1}{2} \int_{\Omega}(|x|^2+\eta) \left(\ln\frac{R^2+\eta}{|x|^2+\eta}\right)^2(v+s_0)^2 + 2\int_{\{u>s_0\}}\ln\frac{R^2+\eta}{|x|^2+\eta}H(u),
		\end{align*}
		further
		\begin{align*}
			&\frac{1}{2}\int_{\Omega}\frac{|x|^2}{|x|^2+\eta}|\nabla v|^2 \\
			\leq & \int_{\Omega}(|x|^2+\eta)\ln^2\left(\frac{R^2+\eta}{|x|^2+\eta}\right)v^2 + s_0^2\int_{\Omega}(|x|^2+\eta)\ln^2\left(\frac{R^2+\eta}{|x|^2+\eta}\right) + 2\int_{\{u>s_0\}}\ln\frac{R^2+\eta}{|x|^2+\eta}H(u) \\
			\leq & \frac{4}{e^2}(R^2+\eta)\int_{\Omega}v^2 + \frac{4s_0^2}{e^2}(R^2+\eta)|\Omega| + 2\int_{\{u>s_0\}}\ln\frac{R^2+\eta}{|x|^2+\eta}H(u).
		\end{align*}
		Now we use Young's inequality in the form
		\begin{align}
			ab \leq \frac{1}{\delta e} e^{\delta a} + \frac{1}{\delta} b \ln b, \quad a, b > 0, \ \delta > 0,
		\end{align}
		we obtain
		\begin{align}
			\int_{\{u>s_0\}}\ln\frac{R^2+\eta}{|x|^2+\eta}H(u) \leq \frac{1}{\delta e}\int_{\{u>s_0\}} \left(\frac{R^2+\eta}{|x|^2+\eta}\right)^\delta + \frac{1}{\delta e}\int_{\{u>s_0\}} H(u)\ln H(u),
		\end{align}
		the first term on the right-hand side can be bounded by a constant, so it suffices to consider only the second term.Since \(\Omega\)is bounded and \(u \in C^2(\Omega)\), we have \(u \in L^\infty(\Omega)\). Denote 
		\begin{align}
			M \coloneqq \|u\|_{L^\infty(\Omega)}.
		\end{align}
		By Sobolev embedding, M \(\leq \widetilde{C} \|u\|_{C^2(\Omega)}\) for some constant \(\widetilde{C}\) depending only on \(\Omega\). Then
		\begin{equation}
			\begin{aligned}[b]
				\int_{\{u>s_0\}} H(u)\ln H(u) \leq \int_{\{u>s_0\}} H(M)\ln H(M)  
				= H(M)\ln H(M) \cdot |\Omega|.
			\end{aligned}
		\end{equation}
		From the assumptions on \( \phi_\varepsilon \) and \( \psi \), we have for \( \sigma \geq s_0 \).
		\begin{align}
			\frac{\sigma \phi_\varepsilon(\sigma)}{\psi(\sigma)} \leq C \sigma^{1-\beta} \ln^\alpha(\sigma + 1),
		\end{align}
		Hence
		\begin{align}
			H(M) \leq H(s_0) + C \int_{s_0}^M \sigma^{1-\beta} \ln^\alpha(\sigma+1)
		\end{align}
		We have
		\begin{align}
			\int_{\{u>s_0\}} H(u)\ln H(u) \leq C(\|u\|_{C^2}, R, \alpha, \beta) < \infty.
		\end{align}
		In the limit \( \eta \to 0 \), Fatou’s lemma thus yields
		\begin{align}
			\frac{1}{2} \int_{\Omega} |\nabla v|^2 \leq C_1 \int_{\Omega} v^2 + C_2.
		\end{align}
		Hence, by \eqref{no uv} and the nonnegativity of \( G \),
		\begin{equation}
			\begin{aligned}[b]
				F(u, v) \geq & \frac{1}{2} \int_{\Omega} |\nabla v|^2 - \int_{\Omega} |\nabla v|^2 - \frac{1}{2} \int_{\Omega} v^2 \\
				\geq & \frac{1}{2} \int_{\Omega} |\nabla v|^2 - \left( 2C_1 + \frac{1}{2} \right) \int_{\Omega} v^2 - 2C_2 \\
				\geq & C\|v\|_{W^{1,2}(\Omega)} + \widetilde{C}
			\end{aligned}
		\end{equation}
		\textbf{case} \(n \geq 3\) Suppose that
		\begin{align}\label{varepsilon}
			\int_{s_0}^s \frac{\sigma \phi(\sigma)}{\psi(\sigma)} \, \mathrm{d}\sigma \leq \frac{n - 2 - \varepsilon}{n} \int_{s_0}^s \int_{s_0}^\sigma \frac{\phi(\tau)}{\psi(\tau)} \, \mathrm{d}\tau \, \mathrm{d}\sigma + Ks \quad \text{for all } s \geq s_0
		\end{align}
		holds for some \( s_0 > 1 \) and \( \varepsilon \in (0, 1) \).
		We fix a nondecreasing \(\zeta_0 \in C^\infty(\mathbb{R})\) such that \(\zeta_0 \equiv 0\) in \((-\infty, 1)\) and \(\zeta_0 \equiv 1\) on \((2, \infty)\), and let \(\zeta(r) \equiv \zeta_k(r) \coloneqq \zeta_0(k(R-r))\) for \(r \in [0, R]\) and \(k \in \mathbb{N}\) large satisfying \(k > 2/R\). Then we have
		\begin{equation}
			\begin{aligned}[b]
				\frac{n-2}{2} \int_{\Omega} \zeta_k(|x|) |\nabla v|^2 \leq & \int_{\Omega} |x| \zeta_k(|x|) \cdot (v + s_0) \cdot |\nabla v| + n \int_{\{u > s_0\}} \zeta_k(|x|) H(u) \\
				\leq & \int_{\Omega} |x|  (v + s_0) |\nabla v| + n \int_{\{u > s_0\}}  H(u)
			\end{aligned}
		\end{equation}
		holds for every \(k\). Via Fatuo's lemma we have 
		\begin{align}
			\frac{n-2}{2} \int_{\Omega} |\nabla v|^2 \leq \int_{\Omega} |x| (v + s_0) |\nabla v| + n \int_{\{u > s_0\}} H(u).
		\end{align}
		By Young’s inequality and $\varepsilon$ we defined in \eqref{varepsilon} we get 
		\begin{equation}
			\begin{aligned}[b]
				\int_{\Omega} |x| (v + s_0) |\nabla v| \leq& \frac{\varepsilon}{4} \int_{\Omega} |\nabla v|^2 + \frac{1}{\varepsilon} \int_{\Omega} |x|^2 (v + s_0)^2 \\
				\leq& \frac{\varepsilon}{4} \int_{\Omega} |\nabla v|^2 + \frac{2R^2}{\varepsilon} \int_{\Omega} v^2 + \frac{2s_0^2 R^2 |\Omega|}{\varepsilon},
			\end{aligned}
		\end{equation}
		then
		\begin{align}
			\frac{n-2-\varepsilon}{2} \int_{\Omega} |\nabla v|^2 \leq -\frac{\varepsilon}{4} \int_{\Omega} |\nabla v|^2 + \frac{2R^2}{\varepsilon} \int_{\Omega} v^2 + \frac{25\delta R^2|\Omega|}{\varepsilon} + n \int_{\{u > s_0\}} H(u)
		\end{align}
		is held. It can be obtained from all the above formulas that
		\begin{align}
			F(u, v) \geq \frac{\varepsilon}{4(n-2-\varepsilon)} \int_{\Omega} |\nabla v|^2 - C_1 \int_{\Omega} v^2 - C_2 - \frac{n}{n-2-\varepsilon} \int_{\{u > s_0\}} H(u) + \int_{\Omega} G(u)
		\end{align}
		where
		\begin{align*}
			C_1 = \frac{2R^2}{\varepsilon(n-2-\varepsilon)} + \frac{1}{2}
		\end{align*}
		and
		\begin{align*}
			C_2 = \frac{2s_0^2 R^2 |\Omega|}{\varepsilon(n-2-\varepsilon)}.
		\end{align*}
		Once again through Ehrling’s lemma, we have
		\begin{align}
			C_1 \int_{\Omega} v^2 \leq \frac{\varepsilon}{4(n-2-\varepsilon)} \int_{\Omega} |\nabla v|^2 + C_3 \left( \int_{\Omega} v \right)^2
		\end{align}
		for some positive constant \(C_3\). 
		Now we define
		\begin{align}
			m=\int_{0}^{\infty} u_{\varepsilon\infty} \, \mathrm{d}t = \int_{0}^{\infty} v_{\varepsilon\infty} \, \mathrm{d}t
		\end{align} 
		in \eqref{steady} then it's clear to see that
		\begin{align}
			F(u, v) \geq -C_3 m^2 - C_2 - \frac{n}{n-2-\varepsilon} \int_{\{u > s_0\}} H(u) + \int_{\Omega} G(u).
		\end{align}
		As \( G(s) \geq 0 \) for \( s \leq s_0 \) and
		\begin{align}
			G(s) - \frac{n}{n-2-\varepsilon} H(s) \geq -\frac{nK}{n-2-\varepsilon} s
		\end{align}
		for all \( s \geq s_0 \) by \eqref{varepsilon}, we conclude that
		\begin{align}
			F(u, v) \geq -c_3 m^2 - c_2 - \frac{nKm}{n-2-\varepsilon}.
		\end{align}
	\end{proof}
	\begin{lemma}
		Let \( n \geq 2 \), \( R > 0 \) and \( \Omega = B_R(0) \), and suppose that there exist \( k > 0 \) and \( s_0 > 1 \) such that	
		\begin{align}\label{limes of k}
			\int_{s_0}^s \int_{s_0}^\sigma \frac{\phi(\tau)}{\psi(\tau)} \, \mathrm{d}\tau \, \mathrm{d}\sigma \leq 
			\begin{cases} 
				k s (\ln s)^\theta, &  n = 2 \text{ with some } \theta \in (0, 1) \\[4pt]
				k s^{2-\alpha}, &  n \geq 3 \text{ with some } \alpha > \dfrac{2}{n} 
			\end{cases}
		\end{align}
		holds for all \( s \geq s_0 \). Then for each \( C>0 \) one can find positive \((u_0, v_0) \in C^\infty(\bar{\Omega}) \times C^\infty(\bar{\Omega})\) which solves \eqref{eq:approximation} and satisfying 
		\begin{align}
			F(u_0, v_0) < -C.
		\end{align}
	\end{lemma}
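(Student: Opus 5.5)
The statement is the analogue of Winkler's construction of initial data with arbitrarily negative energy. I would build explicit concentrating radial data rather than argue abstractly. Fix a mass $m>0$ and, for a parameter $\eta\in(0,1)$ to be sent to zero, set
\[
u_\eta(x)\coloneqq \frac{a_\eta}{(\eta^2+|x|^2)^{\gamma}}+\text{(small positive constant)}, \qquad
v_\eta\coloneqq \text{a suitable smooth positive radial function}.
\]
Here $\gamma$ and $a_\eta$ are chosen so that $\int_\Omega u_\eta\approx m$ and $u_\eta\sim \eta^{-n}$ on $B_\eta(0)$. One natural choice is $\gamma=n/2$ with $a_\eta$ of order $\eta^0$ up to a logarithmic factor when $n=2$. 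Both functions lie in $C^\infty(\bar\Omega)$ and are positive, so Theorem \ref{thm:main} gives a corresponding solution of \eqref{eq:approximation} with these initial data. Hence it suffices to show $F(u_\eta,v_\eta)\to-\infty$ as $\eta\to0$.

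Each term of \eqref{Lyapunov functional} is then estimated separately. First, the hypothesis \eqref{limes of k} bounds $\int_\Omega G(u_\eta)$. Since $G\le 0$ cannot occur in a harmful way and $G$ is bounded on $[0,s_0]$, only $\{u_\eta>s_0\}$ matters. There $G(u_\eta)\le k u_\eta(\ln u_\eta)^\theta$ for $n=2$, which gives $\int G\le C m|\ln\eta|^\theta$. For $n\ge3$, $G(u_\eta)\le k u_\eta^{2-\alpha}$, and with the scaling $u_\eta\sim\eta^{-n}$ on a set of measure $\eta^n$ this gives $\int G\lesssim \eta^{-n(1-\alpha)}$.

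Second, I would choose $v_\eta$ adapted to $u_\eta$ so that $-\int u_\eta v_\eta$ dominates. For $n=2$, take $v_\eta=\ln\frac{R^2+\eta^2}{|x|^2+\eta^2}$ (shifted to be positive). Then $\int u_\eta v_\eta\approx 2m|\ln\eta|$, while $\frac12\int|\nabla v_\eta|^2+\frac12\int v_\eta^2\approx 2\pi|\ln\eta|$. Choosing $m$ large makes the negative $|\ln\eta|$ term win over both these terms and the $|\ln\eta|^\theta$ bound with $\theta<1$. For $n\ge3$, take $v_\eta=\eta^{-(n-2)}\rho(x/\eta)$, or more precisely $v_\eta = \eta^{-\lambda}w(|x|/\eta)$ with an exponent $\lambda\in(n(1-\alpha),\, n-2)$; this interval is nonempty exactly because $\alpha>2/n$. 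Then $\int u_\eta v_\eta\sim\eta^{-\lambda}$. Against this, $\int|\nabla v_\eta|^2\sim\eta^{n-2-2\lambda}$ needs $\lambda$ balanced, and one must tune $v_\eta$'s profile to exploit that exponent.

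The hardest step is the $n\ge3$ bookkeeping. The three exponents from $\int G$, $\int uv$ and $\int|\nabla v|^2$ must be ordered correctly, using exactly $\alpha>2/n$. If the naive scaling fails, I would instead take $v_\eta$ close to the solution of $-\Delta v+v=u_\eta$. By the steady-state identity \eqref{no uv}, $-\int uv+\frac12\int(|\nabla v|^2+v^2)=-\frac12\int(|\nabla v|^2+v^2)$, and this quantity grows like $\eta^{-(n-2)}$. It therefore beats $\int G\lesssim\eta^{-n(1-\alpha)}$ precisely because $n(1-\alpha)<n-2$, which is the condition $\alpha>2/n$. The same choice in $n=2$ gives $-c\,m^2|\ln\eta|$ against $Cm|\ln\eta|^\theta$. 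This version also removes the need to tune $\lambda$, so I would adopt it as the main route. Finally, positivity and smoothness of $v_\eta$ follow from elliptic regularity and the strong maximum principle, and sending $\eta\to0$ yields $F(u_0,v_0)<-C$.
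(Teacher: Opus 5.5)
Your main route is correct and differs genuinely from the paper's. The paper follows Winkler's explicit construction. It fixes $u_\eta=a_\eta\eta^{\beta-n}(|x|^2+\eta^2)^{-\beta/2}$ with a profile exponent $\beta>n$ (not the $\beta$ of $(H_2)$). It prescribes $v_\eta=(\ln\frac R\eta)^{-\kappa}\ln\frac{R^2}{|x|^2+\eta^2}$ with $\kappa<1-\theta$ when $n=2$, and $v_\eta=\eta^{\delta-\gamma}(|x|^2+\eta^2)^{-\delta/2}$ when $n\ge3$. It then evaluates the four terms of $F$ separately by the substitution $r=\eta s$. The damping factor $(\ln\frac R\eta)^{-\kappa}$ pushes $\int|\nabla v_\eta|^2$ down to order $(\ln\frac R\eta)^{1-2\kappa}$, so no condition on the mass is needed. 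For $n\ge3$ the paper only records the scaling limits; closing the argument requires $\gamma\in(n(1-\alpha),n-2)$ and $\delta$ large, which is exactly your secondary route.

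You instead take $v_\eta$ to be the Neumann solution of $-\Delta v+v=u_\eta$. This is the unique minimizer of the strictly convex map $v\mapsto F(u_\eta,v)$ on $W^{1,2}(\Omega)$, so $F(u_\eta,v_\eta)=\int_\Omega G(u_\eta)-\frac12\int_\Omega u_\eta v_\eta$. That is the best possible $v_0$ for a given $u_0$. It works for every mass, needs no tuning, gives a $v_0$ satisfying the Neumann condition, and makes the threshold transparent: $\alpha>\frac2n\iff n(1-\alpha)<n-2$.

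The price is the lower bound on the self-energy, which you assert but do not prove. The same minimality gives it at once. For all $w\in W^{1,2}(\Omega)$ and $t>0$ one has $\frac12\int u_\eta v_\eta\ge t\int u_\eta w-\frac{t^2}{2}\|w\|_{W^{1,2}}^2$, hence
\[
\frac12\int_\Omega u_\eta v_\eta\ \ge\ \frac{\bigl(\int_\Omega u_\eta w\bigr)^2}{2\|w\|_{W^{1,2}(\Omega)}^2}.
\]
Take $w=(\eta^2+|x|^2)^{-(n-2)/2}$ for $n\ge3$, or $w=\ln\frac{R^2+\eta^2}{|x|^2+\eta^2}$ for $n=2$. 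For a profile that puts a fixed fraction of the mass $m$ into $B_\eta$, this yields $\ge c\,m^2\eta^{2-n}$, respectively $\ge c\,m^2|\ln\eta|$. So an explicit test function does re-enter, but only in this untuned form. Alternatively, you can use lower bounds for the Neumann Green function.

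Three smaller points.

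\textbf{Concentration.} The choice $\gamma=n/2$ does not concentrate the mass. Indeed $\int_\Omega(\eta^2+|x|^2)^{-n/2}\,dx\simeq\omega_n|\ln\eta|$ in every dimension, so $a_\eta\simeq m/(\omega_n|\ln\eta|)$ and only a fraction $O(1/|\ln\eta|)$ of the mass lies in $B_\eta$. Take $\gamma>n/2$ and $a_\eta\simeq\eta^{2\gamma-n}$, as the paper does. With $\gamma=n/2$ your rates are off by powers of $|\ln\eta|$, which here happens to be harmless.

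\textbf{Constant in the explicit $n=2$ variant.} There one has $\frac12\int|\nabla v_\eta|^2\simeq4\pi|\ln\eta|$, not $2\pi|\ln\eta|$, so that variant needs $m>2\pi$.

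\textbf{The positive floor.} The constant you add to $u_\eta$ is essential, not cosmetic. For $s<s_0$ one has $G(s)=\int_s^{s_0}(\tau-s)\frac{\phi_\varepsilon(\tau)}{\psi(\tau)}\,d\tau$ and $\phi_\varepsilon(0)>0$. Hence $G(s)\to\infty$ as $s\searrow0$ whenever the exponent in $(H_2)$ is at least $2$, and your remark that $G$ is bounded on $[0,s_0]$ is false in general. The paper's $u_\eta$ has $\min u_\eta\to0$, and its proof applies the hypothesis, valid only for $s\ge s_0$, on all of $\Omega$. Your floor therefore repairs a point the paper's proof skips.
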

	\begin{proof}
		For small \( \eta > 0 \), we define the smooth function \( u_\eta \)  by  
		\begin{align}
			u_\eta(x) \coloneqq a_\eta \cdot \eta^{\beta - n} \cdot (|x|^2 + \eta^2)^{-\beta/2}
		\end{align}
		for \( x \in \bar{\Omega} \), and
		\begin{align}
			\|u_\eta\|_{L^1(\Omega)} = \|u_0\|_{L^1(\Omega)}.
		\end{align}
		here
		\begin{align}
			a_\eta := \frac{\eta^{n-\beta} \|u_0\|_{L^1(\Omega)}}{\int_{\Omega} (|x|^2 + \eta^2)^{-\beta/2} \, \mathrm{d}x}
		\end{align}
		is bounded from above and below by a positive constant. The choice of \(a_\eta\) is restricted to
		\begin{align*}
			\|u_0\|_{L^1(\Omega)} = \|v_0\|_{L^1(\Omega)} < \infty.
		\end{align*}
		\textbf{case} \(n=2\) Let
		\begin{align}
			v_\eta(x) \coloneqq \left( \ln \frac{R}{\eta} \right)^{-\kappa} \cdot \ln \frac{R^2}{|x|^2 + \eta^2}
		\end{align}
		for \( \eta \in (0, R/2) \) , where \( \kappa \in (0, 1) \) is small enough fulfilling \( \kappa < 1 - \theta \). Then substituting \( r = \eta s \), we find
		\begin{equation}
			\begin{aligned}[b]
				\int_{\Omega} |\nabla v_\eta|^2 =& 8\pi \cdot \left( \ln \frac{R}{\eta} \right)^{-2\kappa} \cdot \int_0^R r^3 (r^2 + \eta^2)^{-2} \, \mathrm{d}r \\
				=& 8\pi \cdot \left( \ln \frac{R}{\eta} \right)^{-2\kappa} \cdot \int_0^{R/\eta} s^3 (s^2 + 1)^{-2} \, \mathrm{d}s \\
				\leq& 8\pi \cdot \left( \ln \frac{R}{\eta} \right)^{-2\kappa} \cdot \left( 1 + \ln \frac{R}{\eta} \right)
			\end{aligned}
		\end{equation}
		and
		\begin{equation}
			\begin{aligned}[b]
				\int_{\Omega} v_\eta^2 =& 2\pi \cdot \left( \ln \frac{R}{\eta} \right)^{-2\kappa} \cdot \int_0^R r \left( \ln \frac{R^2}{r^2 + \eta^2} \right)^2 \, \mathrm{d}r \\
				\leq& 8\pi \cdot (\ln 2)^{-2\kappa} \cdot \int_0^R r \left( \ln \frac{R}{r} \right)^2 \, \mathrm{d}r
			\end{aligned}
		\end{equation}
		for all \( \eta \in (0, R/2) \). Moreover,
		\begin{equation}
			\begin{aligned}[b]
				&\left( \ln \frac{R}{\eta} \right)^{\kappa-1} \cdot \int_{\Omega} u_\eta v_\eta \\
				=&\, 2\pi a_\eta \cdot \eta^{\beta-2} \cdot \left( \ln \frac{R}{\eta} \right)^{-1} \cdot \int_0^R r(r^2 + \eta^2)^{-\beta/2} \cdot \ln \frac{R^2}{r^2 + \eta^2} \, \mathrm{d}r \\
				=&\, 2\pi a_\eta \cdot \left( \ln \frac{R}{\eta} \right)^{-1} \cdot \int_0^{R/\eta} s(s^2 + 1)^{-\beta/2} \cdot \left( 2\ln \frac{R}{\eta} - \ln(s^2 + 1) \right) \, \mathrm{d}s \\
				=&\, 4\pi a_\eta \cdot \int_0^{R/\eta} s(s^2 + 1)^{-\beta/2} \, \mathrm{d}s \\
				\quad& - 2\pi a_\eta \cdot \left( \ln \frac{R}{\eta} \right)^{-1} \cdot \int_0^{R/\eta} s(s^2 + 1)^{-\beta/2} \ln(s^2 + 1) \, \mathrm{d}s \\
				\to&\, 4\pi a_0 \cdot \int_0^\infty s(s^2 + 1)^{-\beta/2} \, \mathrm{d}s \quad \text{as } \eta \to 0.
			\end{aligned}
		\end{equation}
		then by \eqref{limes of k}
		\begin{equation}
			\begin{aligned}[b]
				\int_{\Omega} G(u_\eta) 
				&\leq 2\pi k a_\eta \cdot \eta^{\beta-2} \cdot \int_0^R r(r^2 + \eta^2)^{-\beta/2} \cdot \left( \ln\left( a_\eta \eta^{\beta-2}(r^2 + \eta^2)^{-\beta/2} \right) \right)^\theta \, \mathrm{d}r \\
				&= 2\pi k a_\eta \cdot \int_0^{R/\eta} s(s^2 + 1)^{-\beta/2} \cdot \left( \ln\left( a_\eta \eta^{-2}(s^2 + 1)^{-\beta/2} \right) \right)^\theta \, \mathrm{d}s \\
				&\leq 2\pi k a_\eta \cdot \left( 2\ln \frac{\sqrt{a_\eta}}{\eta} \right)^\theta \cdot \int_0^\infty s(s^2 + 1)^{-\beta/2} \, \mathrm{d}s.
			\end{aligned}
		\end{equation}
		Formulas above enter \eqref{Lyapunov functional} to get
		\begin{align}
			F(u_\eta, v_\eta) \leq -c_2 \left( \ln \frac{R}{\eta} \right)^{1-\kappa} + c_3 \left( 1 + \left( \ln \frac{R}{\eta} \right)^{1-2\kappa} + \left( \ln \sqrt{\frac{a_\eta}{\eta}} \right)^\theta \right)
		\end{align}
		for all \(\eta \in (0, R/2)\) with positive constants \(c_2\) and \(c_3\). Since
		\begin{align}
			1-\kappa > 0, \quad 1-\kappa > 1-2\kappa \quad \text{and} \quad 1-\kappa > \theta
		\end{align}
		due to our choice of \(\kappa\), as \(\eta \to 0\) we infer that 
		\begin{align}
			F(u_\eta, v_\eta) \to -\infty
		\end{align}
		\textbf{case} \(n \geq 3\) Suppose that
		\begin{align}
			v_{\eta}(x) \coloneqq \eta^{\delta - \gamma} \cdot (|x|^2 + \eta^2)^{-\delta/2},
		\end{align}
		by substitution \( r = \eta s \) again we see that
		\begin{align}
			\eta^{\lambda - N} \int_0^R r^{N-1} (r^2 + \eta^2)^{-\lambda/2} \, \mathrm{d}r \to A(N, \lambda) \coloneqq \int_0^\infty s^{N-1} (s^2 + 1)^{-\lambda/2} \, \mathrm{d}s, \quad \text{as } \eta \to 0.
		\end{align}
		whenever \( \lambda > N > 0 \), it can easily be checked that  
		\begin{align}
			a_\eta \to a_0 \coloneqq \frac{\|u_0\|_{L^1(\Omega)}}{\omega_n \cdot A(n, \beta)} \quad as \, \eta \to 0,
		\end{align}
		here \(\omega_n\) is surface area of the \(n\)-dimensional unit sphere. Then we use this definition to compute each part of \eqref{Lyapunov functional} 
		\begin{equation}
			\begin{aligned}[b]
				\eta^{-n+2\gamma+2} \int_{\Omega} |\nabla v_\eta|^2 
				&= \omega_n \delta^2 \eta^{-n+2\delta+2} \int_0^R r^{n+1}(r^2 + \eta^2)^{-\delta-2} \, \mathrm{d}r \\
				&\to \omega_n \delta^2 \cdot A(n+2, 2\delta+4) \quad \text{as } \eta \to 0
			\end{aligned}
		\end{equation}
		\begin{equation}
			\begin{aligned}[b]
				\eta^{-n+2\gamma} \int_{\Omega} v_\eta^2 
				&= \omega_n \eta^{-n+2\delta} \int_0^R r^{n-1}(r^2 + \eta^2)^{-\delta} \, \mathrm{d}r \\
				&\to \omega_n \cdot A(n, 2\delta) \quad \text{as } \eta \to 0
			\end{aligned}
		\end{equation}
		\begin{equation}
			\begin{aligned}[b]
				\eta^\gamma \int_{\Omega} u_\eta v_\eta 
				&= \omega_n a_\eta \cdot \eta^{-n+\beta+\delta} \int_0^R r^{n-1} (r^2 + \eta^2)^{-(\beta+\delta)/2} \, \mathrm{d}r \\
				&\to \omega_n a_0 \cdot A(n, \beta+\delta) \quad \text{as } \eta \to 0
			\end{aligned}
		\end{equation}
		\begin{equation}
			\begin{aligned}[b]
				\eta^{(1-\alpha)n} \int_{\Omega} G(u_\eta) 
				&\leq \omega_n k a_\eta^{2-\alpha} \cdot \eta^{(1-\alpha)n+(2-\alpha)(\beta-n)} \int_0^R r^{n-1}(r^2 + \eta^2)^{-(2-\alpha)\beta/2} \, \mathrm{d}r \\
				&\to \omega_n k a_0^{2-\alpha} \cdot A(n, (2-\alpha)\beta) \quad \text{as } \eta \to 0
			\end{aligned}
		\end{equation}
		
	\end{proof}
	\begin{theorem}\label{thm:result}
		Let \((u_\varepsilon(t), v_\varepsilon(t))\) be a solution of the approximate equation. Then there exists a finite maximal time \(T_{\max} = T_{\max}(\varepsilon) < \infty\) such that the solution cannot be extended beyond \(t = T_{\max}\). 
		Moreover, when \(t \nearrow T_{\max}\), the solution blows up in the sense that 
		\[
		\limsup_{t \nearrow T_{\max}} u_\varepsilon(t) = +\infty.
		\]
	\end{theorem}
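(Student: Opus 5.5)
This is the standard Lyapunov-functional contradiction argument (in the spirit of Winkler's blow-up proofs), which the lemmas above are set up to support. Fix radially symmetric initial data $(u_0,v_0)$ on $\Omega=B_R(0)$ with prescribed mass $m=\int_\Omega u_0$. I will argue by contradiction: suppose $T_{\max}=\infty$ and $u_\varepsilon$ stays bounded, so by Theorem~\ref{thm:main} the solution is global and bounded.

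\textbf{Step 1 (energy decay along the flow).} Integrating the identity \eqref{differential Lyapunov functional} for $F$ in time gives
\[
F(u_\varepsilon(t),v_\varepsilon(t))\le F(u_0,v_0)+\int_0^t\!\!\int_\Omega f_\varepsilon(u_\varepsilon)\Bigl(\int_{s_0}^{u_\varepsilon}\tfrac{\phi_\varepsilon}{\psi}-v_\varepsilon\Bigr).
\]
The source term must be controlled. I would use $(H_3)$: $f_\varepsilon\le a-b(u+\varepsilon)^\kappa$, together with the boundedness of the integrand on bounded sets, to show that the source integral is bounded above by a constant $C_0$ that depends only on the mass bound. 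A comparison of $\frac{d}{dt}\int u_\varepsilon$ with $a|\Omega|-b\int u_\varepsilon^\kappa$ keeps the mass $m(t)$ uniformly bounded.

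\textbf{Step 2 (limit is a steady state).} By the compactness discussion following the Lyapunov theorem, there is a sequence $t_k\to\infty$ with $(u_\varepsilon(t_k),v_\varepsilon(t_k))\to(u_{\varepsilon\infty},v_{\varepsilon\infty})$ in $C^2(\bar\Omega)$. By the Remark, this limit is a radial solution of \eqref{steady}, and continuity of $F$ gives
\[
F(u_{\varepsilon\infty},v_{\varepsilon\infty})\le F(u_0,v_0)+C_0 .
\]

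\textbf{Step 3 (the two lemmas conflict).} The previous lemma bounds $F$ from below on radial steady states by a constant $-C$ depending only on the mass (through $-C_3m^2-C_2-\frac{nKm}{n-2-\varepsilon}$, or via the $n=2$ argument). The last lemma, which applies because $(H_1)$–$(H_2)$ give $\phi_\varepsilon/\psi\lesssim \ln^\alpha(1+s)s^{-\beta}$ and hence the growth condition \eqref{limes of k} on $G$, produces radial data $(u_\eta,v_\eta)$ of the same mass with $F(u_\eta,v_\eta)<-C-C_0-1$. Taking $(u_0,v_0)=(u_\eta,v_\eta)$ yields
\[
-C\le F(u_{\varepsilon\infty},v_{\varepsilon\infty})\le F(u_0,v_0)+C_0<-C-1,
\]
a contradiction. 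So $T_{\max}<\infty$, and the extensibility criterion \eqref{eq:blowup}, combined with the $W^{1,q}$ control of $v_\varepsilon$ via semigroup estimates whenever $u_\varepsilon$ is bounded, forces $\limsup_{t\nearrow T_{\max}}\|u_\varepsilon(t)\|_{L^\infty}=\infty$.

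\textbf{Main obstacle.} The hard step is Step 1, keeping the source term and the mass uniformly controlled so that the constant in the lower bound does not degenerate. I would first try to show that $f_\varepsilon$ vanishes for large $u$, i.e.\ that $f_\varepsilon$ has compact support in $[0,1/\varepsilon]$. Then the source contribution is bounded by $t\cdot C(\varepsilon)$, and a bound linear in $t$ is not enough. If that fails, I would weaken the conclusion to the case $f_\varepsilon\equiv0$ for large times, as the paper does in its own computation.
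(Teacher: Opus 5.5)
Your closing step (bounded $u_\varepsilon$ gives a $W^{1,q}$ bound for $v_\varepsilon$ by semigroup estimates, contradicting \eqref{eq:blowup}) is in substance the paper's entire proof. The paper runs only this extensibility argument, which presupposes $T_{\max}<\infty$ and never establishes it. Your Lyapunov argument is meant to supply the finiteness, but it has a genuine logical gap. The hypothesis you refute is ``$T_{\max}=\infty$ \emph{and} $u_\varepsilon$ bounded''. A contradiction therefore shows only that the solution is unbounded: either $T_{\max}<\infty$, or $T_{\max}=\infty$ and $\limsup_{t\to\infty}\|u_\varepsilon(t)\|_{L^\infty(\Omega)}=\infty$. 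The $\omega$-limit/energy comparison cannot exclude the second alternative, because the $C^2$-compactness of the trajectory in Step 2 comes precisely from uniform-in-time bounds. This is why the Lyapunov-functional method for parabolic--parabolic systems is known to give only blow-up in finite \emph{or infinite} time, and ``so $T_{\max}<\infty$'' is a non sequitur. Moreover, your argument at best concerns the specially chosen data $(u_\eta,v_\eta)$, while the statement is about every solution. No argument can cover every solution: constant equilibria $(c,c)$ with $f_\varepsilon(c)=0$ (e.g.\ any $c>1/\varepsilon$ if $f_\varepsilon$ is supported in $[0,1/\varepsilon]$) are global bounded solutions.

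Step 1 is not just the hard step; as it stands it fails. For $f_\varepsilon\not\equiv0$ the term $\int_\Omega f_\varepsilon(u_\varepsilon)\bigl(\int_{s_0}^{u_\varepsilon}\frac{\phi_\varepsilon}{\psi}-v_\varepsilon\bigr)$ has no sign, so $F$ is not a Lyapunov functional. With only an $O(t)$ bound you get neither the uniform estimate $F(u_\varepsilon(t),v_\varepsilon(t))\le F(u_0,v_0)+C_0$ nor finiteness of $\int_0^\infty\!\int_\Omega v_{\varepsilon t}^2$ and of the flux integral. Step 2 needs these to conclude that the $\omega$-limit is stationary. Even granting a stationary limit, it solves the problem \emph{with} the source $f_\varepsilon(u)$, whose flux $\phi_\varepsilon(u)\nabla u-\psi(u)\nabla v$ need not vanish. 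The identity $u\nabla v=\nabla H(u)$ on $\{u>s_0\}$, which drives the Pohozaev-type lower bound, is then lost. Also, since mass is not conserved, you cannot match the mass of the limit with that of $(u_\eta,v_\eta)$ as Step 3 requires. Retreating to ``$f_\varepsilon\equiv0$ for large times'' changes the theorem. Two smaller points: for $n=2$ the paper's lower bound depends on $\|u\|_{C^2}$ of the steady state, so it is not the uniform constant you use. And for $n=2$, $\beta=1$, $G(s)$ is of order $s\ln^{\alpha+1}s$ (with $\alpha\ge1$ from $(H_1)$), so $(H_1)$--$(H_2)$ do not give the growth condition of the last lemma.
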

	\begin{proof}
		Let \(T_{\max}\) be the supremum of all \(T > 0\) for which the solution \((u_\varepsilon, v_\varepsilon)\) exists on \([0,T)\).  
		Suppose, for contradiction, that \(u_\varepsilon\) does \emph{not} blow up as \(t \nearrow T_{\max}\), i.e. 
		\[
		\sup_{t \in [0, T_{\max})} u_\varepsilon(t)<+\infty.
		\]
		Since Proposition~1 holds for \((u_\varepsilon, v_\varepsilon)\), the boundedness of \(u_\varepsilon\) implies that \(v_\varepsilon\) and the regularity of the solution remain controlled up to \(t = T_{\max}\).  
		By the local existence theory for the approximate equation, one can then extend the solution from \(t = T_{\max}\) to some interval \([T_{\max}, T_{\max} + \delta)\) with \(\delta > 0\).
		This contradicts the definition of \(T_{\max}\) as the \emph{maximal} existence time.  
		Hence our assumption is false; therefore
		\[
		\limsup_{t \nearrow T_{\max}} u_\varepsilon(t)=+\infty,
		\]
		which means \(u_\varepsilon\) is infinte at \(t = T_{\max}\).
	\end{proof} 
	
	\section{Main Results}
	\begin{theorem}
		Since \(X_1\times X_2\) is a compact metric space, Then:
		\begin{enumerate}
			\item The families \(\{u_\varepsilon\}_{\varepsilon>0}\) and \(\{v_\varepsilon\}_{\varepsilon>0}\) are uniformly bounded in \(C(X_1\times X_2)\).
			\item By the Arzelà--Ascoli theorem, there exist subsequences \(\varepsilon_k \rightarrow 0^+\) such that
			\[
			\| u_{\varepsilon_k} - u \|_{C(X_1\times X_2)} \rightarrow 0 \quad \text{and} \quad \| v_{\varepsilon_k} - v \|_{C(X_1\times X_2)} \rightarrow 0.
			\]
		\end{enumerate}
	\end{theorem}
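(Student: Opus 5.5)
The plan is to treat the statement at face value. The families $\{u_\varepsilon\}_{\varepsilon>0}$ and $\{v_\varepsilon\}_{\varepsilon>0}$ are regarded as elements of $C(X_1\times X_2)$, the space of continuous real functions on the compact metric space $X_1\times X_2$, normed by the supremum. Part 1 (uniform boundedness) comes first; part 2 then follows from the Arzel\`a--Ascoli theorem once equicontinuity is also established. Compactness of $X_1\times X_2$ is taken as given, as the statement asserts, and the two components are handled symmetrically.

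\textbf{Step 1: uniform boundedness.} Fix an arbitrary finite horizon below the blow-up time supplied by Theorem~\ref{thm:result}. On this horizon, bound $u_\varepsilon$ by comparing with the spatially constant supersolution $\bar u'=a-b\bar u^{\kappa}$ built from the upper bound on $f_\varepsilon$. Heuristically this gives $u_\varepsilon\le\max\{\|u_0\|_{L^\infty(\Omega)},(a/b)^{1/\kappa}\}$, uniformly in $\varepsilon$. Nonnegativity comes from $f_\varepsilon(0)\ge0$.

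The comparison is not automatic, because the cross-diffusion term $-\nabla\cdot(\psi(u_\varepsilon)\nabla v_\varepsilon)$ is not controlled by the maximum principle alone. The honest route is to use the fixed-point estimate from the proof of Theorem~\ref{thm:main}. That estimate yields $\|u_\varepsilon\|_{L^\infty}\le R+1$ and $\|v_\varepsilon\|_{W^{1,q}}\le KR+1$ on the local interval. The constants there depend only on $R$, $K$, $\|f_\varepsilon\|_{L^\infty((-R-1,R+1))}$ and $T$, and I would check that each is bounded independently of $\varepsilon$. For the $f_\varepsilon$ term this uses $-b(s+\varepsilon)^\kappa\le f_\varepsilon(s)\le a-b(s+\varepsilon)^\kappa$ with $\varepsilon<1$. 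For $v_\varepsilon$, the bound follows by applying the second component of $\varphi$, giving $\|v_\varepsilon\|_{L^\infty}\le \|v_0\|_{L^\infty}+T\sup\|u_\varepsilon\|_{L^\infty}$. Taking suprema over $X_1\times X_2$ then gives $\sup_\varepsilon\|u_\varepsilon\|_{C(X_1\times X_2)}+\sup_\varepsilon\|v_\varepsilon\|_{C(X_1\times X_2)}<\infty$.

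\textbf{Step 2: equicontinuity.} This is the step I expect to be the main obstacle. Arzel\`a--Ascoli on $C(X_1\times X_2)$ needs a modulus of continuity that is uniform in $\varepsilon$. I would try to derive it from the contraction estimate in Theorem~\ref{thm:main}, namely $\|\varphi(u,v)-\varphi(\tilde u,\tilde v)\|_X\le \tilde C\|(u,v)-(\tilde u,\tilde v)\|_X$. The point is that $\tilde C$ depends only on $R$, $K$, $T$, $\beta$ and $\sup_{[-R-1,R+1]}|f_\varepsilon'|$.

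The difficulty is this: $f_\varepsilon'$ need not be bounded uniformly in $\varepsilon$, since $f_\varepsilon\in C_0^\infty$ merely converges to $f$. I would therefore impose, or extract from $f\in W^{1,\infty}_{\mathrm{loc}}$ via standard mollification, the bound $\sup_\varepsilon\|f_\varepsilon'\|_{L^\infty((-R-1,R+1))}\le\|f'\|_{L^\infty((-R-2,R+2))}$. With this, $\tilde C$ is $\varepsilon$-independent, and every $u_\varepsilon$, $v_\varepsilon$ is Lipschitz on $X_1\times X_2$ with a common constant, which gives equicontinuity.

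\textbf{Step 3: compactness and the limit.} Apply Arzel\`a--Ascoli on the compact metric space $X_1\times X_2$ to extract $\varepsilon_k\to0^+$ with $u_{\varepsilon_k}\to u$ in $C(X_1\times X_2)$. Then pass to a further subsequence so that $v_{\varepsilon_k}\to v$ as well, using a diagonal argument over the two components.
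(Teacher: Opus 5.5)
Your Step~2 does not give equicontinuity, and this is the central gap. The contraction estimate from Theorem~\ref{thm:main} controls how the map $\varphi$ depends on its \emph{input} $(u,v)\in S$ for one fixed $\varepsilon$. At the fixed point, where $\varphi(u_\varepsilon,v_\varepsilon)=(u_\varepsilon,v_\varepsilon)$, it carries no information about the oscillation of $u_\varepsilon$ in its own variables, such as $|u_\varepsilon(x,t)-u_\varepsilon(y,s)|$, nor about how $u_\varepsilon$ varies with $\varepsilon$. So ``every $u_\varepsilon$ is Lipschitz with a common constant'' does not follow. A uniform modulus of continuity requires $\varepsilon$-independent H\"older estimates for the degenerate equation $u_t=\nabla\cdot(\ln^\alpha(1+u+\varepsilon)\nabla u)-\nabla\cdot(\psi(u)\nabla v)+f_\varepsilon(u)$. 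These would be of intrinsic-scaling (DiBenedetto) type, presumably exploiting that $\psi(u)\le c_2u^\beta$ degenerates together with $\phi$. That is the missing idea, and it is exactly what the paper's own proof leaves to ``regularity properties of the equation''.

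Your repair of $f_\varepsilon'$ also does not make the constants $\varepsilon$-independent. Both the self-map and the contraction estimates rest on the smoothing bound $\|e^{\sigma\mathcal{B}}\nabla\cdot z\|_{L^p(\Omega)}\le C\sigma^{-\frac12-\frac n2(\frac1q-\frac1p)}\|z\|_{L^q(\Omega)}$. Its constant depends on the ellipticity floor $\phi_\varepsilon\ge\ln^\alpha(1+\varepsilon)\to0$; already for $\mathcal{B}=\lambda\Delta$ it is proportional to $\lambda^{-\frac12-\frac n2(\frac1q-\frac1p)}$. Moreover, since $\mathcal{B}$ depends on $u$, there is not even a fixed linear semigroup to which the bound applies.

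Step~1 inherits this. The bound $\|u_\varepsilon\|_{L^\infty(\Omega)}\le R+1$ holds only on $[0,T]$, with $T$ chosen small in terms of that $\varepsilon$-dependent constant, so the interval may shrink to zero as $\varepsilon\to0$. Even for fixed $\varepsilon$, a bound on $[0,T]$ says nothing on $\bar\Omega\times[0,T_{\max})$, where $X_1$ lives. Theorem~\ref{thm:result}, which you invoke, asserts $\limsup_{t\nearrow T_{\max}(\varepsilon)}u_\varepsilon=+\infty$, so ``taking suprema over $X_1\times X_2$'' cannot produce a finite number. Nothing available gives $\inf_\varepsilon T_{\max}(\varepsilon)>0$, which a common horizon would need.

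The paper's appeal to the energy structure does not supply this either. The functional $F$ in \eqref{Lyapunov functional} involves $G(s)$, which grows at most like $s\ln^{\alpha+1}s$, and its time derivative contains a sign-indefinite $f_\varepsilon$-term, so no $L^\infty$ bound can come from it.

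Finally, accepting compactness of $X_1\times X_2$ ``as given'' is not innocuous. These are unbounded infinite-dimensional function spaces, and $u_\varepsilon\in X_1$ is not a real function on $X_1\times X_2$, so an argument built on that reading says nothing about the solutions. A meaningful version would have to establish $\inf_\varepsilon T_{\max}(\varepsilon)>T>0$, together with $\varepsilon$-uniform $L^\infty$ and H\"older bounds on $\bar\Omega\times[0,T]$. Only then could you apply Arzel\`a--Ascoli in $C(\bar\Omega\times[0,T])$, and none of these ingredients follows from the fixed-point construction.
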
	
	\begin{proof}
		For each \(\varepsilon > 0\), the existence of \((u_\varepsilon, v_\varepsilon) \subset C(X_1\times X_2)\) follows from a standard fixed-point argument in \(C(X_1\times X_2)\). 
		Uniform bounds in \(C(X_1\times X_2)\) are obtained from the energy structure of the approximate equation. 
		Equicontinuity of \(\{u_\varepsilon\}\) and \(\{v_\varepsilon\}\) follows from the regularity properties of the equation. 
		Applying the Arzelà--Ascoli theorem, we extract subsequences converging uniformly to some \(u, v \in C(X_1\times X_2)\). 
		Passing to the limit in the weak formulation shows that \((u, v)\) satisfies the original equation.
	\end{proof}
	Consequently, \eqref{eq:equation} admits a unique local solution \((u, v)\), which blows up in finite time \(T_{\max} < \infty\) in the sense that
	\[
	\lim_{t \nearrow T_{\max}} \|u(t)\|_{L^\infty(\Omega)} = +\infty.
	\]
	\section{Conclusion}
	% ========== Acknowledgments (Optional) ==========
	\section*{Acknowledgments}
	
	Above all, I wish to express my deepest and most sincere gratitude to my supervisor, Dr. \textbf{Shaopeng Xu}, whose unwavering guidance, profound wisdom, and constant encouragement have been a true blessing throughout this journey. His mentor-ship has not only shaped this research but also inspired me to grow as a mathematician and as a person. It has been an honor and a privilege to learn under his tutelage.
	I am also profoundly grateful to my senior fellow apprentice, Ms. \textbf{Yashuang Zhao}, the second author of this paper, for her generous support, insightful discussions, and gentle encouragement. Her patience and kindness have been a source of comfort and strength, and I am truly blessed to have such a wonderful colleague and friend.
	My sincere thanks extend to Prof. \textbf{Shengjun Li} for his generous financial support, which made this work possible, and to Prof. \textbf{Haohua Wang} for providing me with such a peaceful and inspiring study environment. Their kindness and belief in my work have been a great encouragement.
	Last but certainly not least, I wish to offer my heartfelt thanks to my beloved family. Their unconditional love, unwavering faith, and endless sacrifices have sustained me through every challenge. Words cannot express how blessed I am to have them by my side. This work is as much theirs as it is mine.
	\newpage
	\bibliographystyle{plain}
	\bibliography{ref.bib}
\end{document}